\newcommand{\Lap}{\text{Lap}}
\newcommand{\R}{\mathbb{R}}
\newcommand{\norm}[1]{\left\lVert#1\right\rVert}
\newcommand{\E}[1]{\mathbb{E}\left[#1\right]}
\newcommand{\abs}[1]{\lvert{#1}\rvert}
\newcommand{\set}[1]{\{{#1}\}}
\newcommand{\argmax}{\mathrm{argmax}}
\newcommand{\eps}{\epsilon}
\definecolor{DarkGreen}{rgb}{0.1,0.5,0.1}
\newcommand{\todo}[1]{\textcolor{DarkGreen}{[To do: #1]}}
\newcommand{\sk}[1]{\textcolor{blue}{[Sara: #1]}}
\newcommand{\wz}[1]{\textcolor{purple}{[Wanrong: #1]}}
\newcommand{\Offline}{{\textsc{OfflinePCPD}}}
\newcommand{\Online}{{\textsc{OnlinePCPD}}}
\newcommand{\AboveThresh}{{\textsc{AboveThresh}}}
\newcommand{\ReportMax}{{\textsc{ReportMax}}}
\newtheorem{theorem}{Theorem}
\newtheorem{lemma}{Lemma}
\newtheorem{corollary}[lemma]{Corollary}
\newtheorem{definition}{Definition}
\newtheorem{example}{Example}
\title{Differentially Private Change-Point Detection}
\author{Rachel Cummings\footnotemark[1] \and Sara Krehbiel\footnotemark[2] \and Yajun Mei\footnotemark[1] \and Rui Tuo\footnotemark[3] \and Wanrong Zhang\footnotemark[1]}
\begin{document}

\renewcommand{\thefootnote}{\fnsymbol{footnote}}
\footnotetext[1]{School of Industrial and Systems Engineering, Georgia Institute of Technology. Email: \texttt{\{rachelc, ymei, wanrongz\}@gatech.edu}.  R.C. supported in part by a Mozilla Research Grant.  Y.M. and W.Z. supported in part by NSF grant CMMI-1362876.}
\footnotetext[2]{Department of Math and Computer Science, University of Richmond. Email: \texttt{krehbiel@richmond.edu}. Supported in part by a Mozilla Research Grant.}
\footnotetext[3]{Academy of Mathematics and Systems Science, Chinese Academy of Sciences. Supported in part by NSF grant DMS-156443.}

\renewcommand{\thefootnote}{\arabic{footnote}}

\maketitle

\abstract{The {\em change-point detection problem} seeks to identify distributional changes at an unknown change-point~$k^*$ in a stream of data. This problem appears in many important practical settings involving personal data, including biosurveillance,  fault detection, finance, signal detection, and security systems. The field of {\em differential privacy} offers data analysis tools that provide powerful worst-case privacy guarantees. We study the statistical problem of change-point detection through the lens of differential privacy. We give private algorithms for both online and offline change-point detection, analyze these algorithms theoretically, and provide empirical validation of our results.}


\section{Introduction}

The {\em change-point detection problem} seeks to identify distributional changes at an unknown change-point~$k^*$ in a stream of data. 
The estimated change-point should be consistent with the hypothesis that the data are initially drawn from pre-change distribution $P_0$ but from post-change distribution $P_1$ starting at the change-point. This problem appears in many important practical settings, including biosurveillance,  fault detection, finance, signal detection, and security systems. For example, the CDC may wish to detect a disease outbreak based on real-time data about hospital visits, or smart home IoT devices may want to detect changes changes in activity within the home. In both of these applications, the data contain sensitive personal information.

The field of {\em differential privacy} offers data analysis tools that provide powerful worst-case privacy guarantees. Informally, an algorithm that is $\eps$-differentially private ensures that any particular output of the algorithm is at most $e^\eps$ more likely when a single data entry is changed. In the past decade, the theoretical computer science community has developed a wide variety of differentially private algorithms for many statistical tasks. The private algorithms most relevant to this work are based on the simple output perturbation principle that to produce an $\eps$-differentially private estimate of some statistic on the database, we should add to the exact statistic noise proportional to $\Delta/\eps$, where $\Delta$ indicates the {\em sensitivity} of the statistic, or how much it can be influenced by a single data entry.

We study the statistical problem of change-point problem through the lens of differential privacy. We give private algorithms for both online and offline change-point detection, analyze these algorithms theoretically, and then provide empirical validation of these results. 

\subsection{Related work}

The change-point detection problem originally arose from industrial quality control, and has since been applied in a wide variety of other contexts including climatology \cite{Lund:2002}, econometrics \cite{Bai:Perron:2003}, and DNA analysis \cite{Zhang:Siegmund:2012}. The problem is studied both in the \emph{offline setting}, in which the algorithm has access to the full dataset $X=\set{x_1,\dots,x_n}$ up front, and in the \emph{online setting}, in which data points arrive one at a time $X=\set{x_1,\dots}$.  Change-point detection  is a canonical problem in statistics that has been studied for nearly a century; selected results include \cite{shewhart:1931,page:1954,shiryaev:1963,roberts:1966,lorden:1971,pollak:1985, pollak:1987, moustakides:1986,lai:1995, lai:2001,kulldorff:2001,mei:2006a, mei:2008a, mei:2010,chan:2017}.

Our approach is inspired by the commonly used Cumulative Sum (CUSUM) procedure \cite{page:1954}. 
 It follows the generalized log-likelihood ratio principle, calculating $$\ell(k)=\sum_{i=k}^n \log \frac{P_1(x_i)}{P_0(x_i)}$$ for each $k\in[n]$ and declaring that a change occurs if and only if $\ell(\hat k)\ge T$ for MLE $\hat k = \argmax_k \ \ell(k)$ and appropriate threshold $T>0$. 
The existing change-point literature works primarily in the asymptotic setting when 
$k^{*}_{n} / n \to r$ for some $r \in (0,1)$ as $n\to\infty$ 
(see, e.g., \cite{Hinkley:1970,Carlstein:1988}). In contrast, we consider finite databases and provide the first accuracy guarantees for the MLE from a finite sample ($n<\infty$).

In offering the first algorithms for {\em private} change-point detection, 
we primarily use two powerful tools from the differential privacy literature.  \ReportMax\ \cite{dwork2014algorithmic} calculates noisy approximations of a stream of queries on the database and reports which query produced the largest noisy value. We instantiate this with partial log-likelihood queries to produce a private approximation of the the change-point MLE in the offline setting. 
\AboveThresh~\cite{DNRRV09} calculates noisy approximations of a stream of queries on the database iteratively and aborts as soon as a noisy approximation exceeds a specified threshold. We extend our offline results to the harder online setting, in which a bound on $k^*$ is not known a priori, by using \AboveThresh\ to identify a window of fixed size $n$ in which a change is likely to have occurred so that we can call our offline algorithm at that point to estimate the true change-point.

\subsection{Our results}

We use existing tools from differential privacy to solve the change-point detection problem in both offline and online settings, neither of which have been studied in the private setting before.

\paragraph{Private offline change-point detection.} We develop an offline private change-point detection algorithm \Offline\ (Algorithm~\ref{algo.offline}) that is accurate under one of two assumptions about the distributions from which data are drawn. As is standard in the privacy literature, we give accuracy guarantees  that bound the additive error of our estimate of the true change-point with high probability. 
Our accuracy theorem statements (Theorems~\ref{thm02} and \ref{thm.assumption1acc}) also provide guarantees for the non-private estimator for comparison. Since traditional statistics typically focuses on the the asymptotic consistency and unbiasedness of the estimator, ours are the first finite-sample accuracy guarantees for the standard (non-private) MLE. 
As expected, MLE accuracy decreases with the sensitivity of the measured quantity but increases as the pre- and post-change distribution grow apart. Interestingly, it is constant with respect to the size of the database. In providing MLE bounds alongside accuracy guarantees for our private algorithms, we are able to quantify the cost of privacy as roughly $D_{KL}(P_0||P_1)/\eps$. 

 We are able to prove $\eps$-differential privacy under the first distributional assumption, which is that the measured quantity has bounded sensitivity $\Delta(\ell)$, by instantiating the general-purpose \ReportMax\ algorithm from the privacy literature with our log-likelihood queries (Theorem~\ref{thm.assumption1priv}). Importantly and in contrast to our accuracy results, the distributional assumption need only apply to the hypothesized distributions from which data are drawn; privacy holds for arbitrary input databases. We offer a limited privacy guarantee for our second distributional assumption, ensuring that if an individual data point is drawn from one of the two hypothesized distributions, redrawing that data from either of the distributions will not be detected, regardless of the composition of the rest of the database (Theorem~\ref{thm.relaxedpriv}).

\paragraph{Private online change-point detection.} In \Online\ (Algorithm~\ref{algo2}), we extend our online results to the offline setting by using the \AboveThresh\ framework to first identify a window in which the change is likely to have happened and then call the offline algorithm to identify a more precise approximation of when it occurred. Standard $\eps$-differential privacy under our first distributional assumption follows from composition of the underlying privacy mechanisms (Theorem~\ref{thm.onlinepriv}).\footnote{We note that we can relax our distributional assumption and get a weaker privacy guarantee as in the offline setting if desired.} Accuracy of our online mechanism relies on appropriate selection of the threshold that identifies a window in which a change-point has likely occurred, at which point the error guarantees are inherited from the offline algorithm
 (Theorem~\ref{thm.onlineacc}).

\paragraph{Empirical validation.} Finally, we run several Monte Carlo experiments to validate our theoretical results for both the online and offline settings. We consider data drawn from Bernoulli and Gaussian distributions, which satisfy our first and second distributional assumptions, respectively. Our offline experiments are summarized in \Cref{fig:offline}, which shows that change-point detection is easier when $P_0$ and $P_1$ are further apart and harder when the privacy requirement is stronger ($\eps$ is smaller). Additionally, these experiments enhance our theoretical results, finding that \Offline\ performs well even when we relax the assumptions required for our theoretical accuracy bounds by running our algorithm on imperfect hypotheses $P_0$ and $P_1$ that are closer together than the true distributions from which data are drawn. \Cref{fig:online1} shows that \Online\ also performs well, consistent with our theoretical guarantees.


\section{Preliminaries}

Our work considers the statistical problem of change-point detection through the lens of differential privacy.  Section \ref{s.cpbackground} defines the change-point detection problem, and Section \ref{s.dpbackground} describes the differentially private tools that will be brought to bear.

\subsection{Change-point background}\label{s.cpbackground}



Let $X=\set{x_1,\dots,x_n}$ be $n$ real-valued data points.  The \emph{change-point detection problem} is parametrized by two distributions, $P_0$ and $P_1$.  The data points in $X$ are hypothesized to initially be sampled i.i.d. from $P_0$, but at some unknown change time $k^\ast \in [n]$, an event may occur (e.g., epidemic disease outbreak) and change the underlying distribution to $P_1$.  The goal of a data analyst is to announce that a change has occurred as quickly as possible after $k^\ast$.  Since the $x_i$ may be sensitive information---such as individuals' medical information or behaviors inside their home---the analyst will wish to announce the change-point time in a privacy-preserving manner.


In the standard non-private offline change-point literature, the analyst wants to test the null hypothesis $H_0:k^*=\infty$, where $x_1,\dots,x_n\sim_{\text{iid}}P_0$, against the composite alternate hypothesis $H_1:k^*\in[n]$, where $x_1,\dots,x_{k^*-1}\sim_{\text{iid}} P_0$ and $x_{k^*},\dots,x_n\sim_{\text{iid}} P_1$. The log-likelihood ratio of $k^*=\infty$ against $k^*=k$ is given by 
\begin{equation}\label{eqn01}\ell(k,X)=\sum_{i=k}^n \log \frac{P_1(x_i)}{P_0(x_i)}.\end{equation}
The maximum likelihood estimator (MLE) of the change time $k^*$ is given by
\begin{equation}\label{eqn03}\hat k(X) = \argmax_{k\in[n]} \ell(k,X).\end{equation}
When $X$ is clear from context, we will simply write $\ell(k)$ and $\hat{k}$.

An important quantity in our accuracy analysis will be the Kullback-Leibler distance between probability distributions $P_0$ and $P_1$, defined as $D_{KL}(P_1||P_0)= \int_{-\infty}^{\infty} P_1(x) \log\frac{P_1(x)}{P_0(x)} dx=\mathbb{E}_{x\sim P_1}[\log \frac{P_1(x)}{P_0(x)}]$. We always use $\log$ to refer to the natural logarithm,  and when necessary, we interpret $\log \frac{0}{0} = 0$.

We will measure the additive error  of our estimations of the true change point as follows. 

\begin{definition}[$(\alpha, \beta)$-accuracy]
A change-point detection algorithm that produces a change-point estimator $\tilde{k}(X)$ where a distribution change occurred at time $k^*$ is \emph{$(\alpha,\beta)$-accurate} if $\Pr[|\tilde{k}-k^\ast|<\alpha] \geq 1- \beta$, where the probability is taken over randomness of the algorithm and sampling of $X$.
\end{definition}


%


\subsection{Differential privacy background}\label{s.dpbackground}

Differential privacy bounds the maximum amount that a single data entry can affect analysis performed on the database.  Two databases $X,X'$ are \emph{neighboring} if they differ in at most one entry.

\begin{definition}[Differential Privacy \cite{DMNS06}]\label{def.dp}
An algorithm $\mathcal{M}: \mathbb{R}^n \rightarrow \mathcal{R}$ is \emph{$(\epsilon,\delta)$-differentially private} if for every pair of neighboring databases $X,X' \in \mathbb{R}^n$, and for every subset of possible outputs $\mathcal{S} \subseteq \mathcal{R}$,
\[ \Pr[\mathcal{M}(X) \in \mathcal{S}] \leq \exp(\epsilon)\Pr[\mathcal{M}(X') \in \mathcal{S}] + \delta. \]
If $\delta = 0$, we say that $\mathcal{M}$ is {\em $\epsilon$-differentially private}.
\end{definition}

One common technique for achieving differential privacy is by adding Laplace noise.  The \emph{Laplace distribution} with scale $b$ is the distribution with probability density function: $\Lap(x|b) = \frac{1}{2b} \exp\left(-\frac{|x|}{b}\right)$.  We will write $\Lap(b)$ to denote the Laplace distribution with scale $b$, or (with a slight abuse of notation) to denote a random variable sampled from $\Lap(b)$.

The \emph{sensitivity} of a function or query $f$ 
 is defined as 
  $\Delta (f) = \max_{\text{neighbors } X, X'} | f(X) - f(X') |$.  The Laplace Mechanism of \cite{DMNS06} takes in a function $f$, database $X$, and privacy parameter $\epsilon$, and outputs $f(X) + \Lap(\Delta (f)/\epsilon)$. 
  
  Our algorithms rely on two existing differentially private algorithms, \ReportMax~\cite{dwork2014algorithmic} and \AboveThresh~\cite{DNRRV09}. 
The \ReportMax\ algorithm takes in a collection of queries, computes a noisy answer to each query, and returns the index of the query with the largest noisy value.  We use this as the framework for  our offline private change-point detector \Offline\ in Section \ref{s.offline} to privately select the time $k$ with the highest log-likelihood ratio $\ell(k)$.

{\centering
\begin{minipage}{\linewidth}
\begin{algorithm}[H]
\caption{Report Noisy Max : \ReportMax($X, \Delta, \{f_1, \ldots, f_m\}, \epsilon$)}
\begin{algorithmic}
\State \textbf{Input:} database $X$, set of queries $\{f_1, \ldots, f_m\}$ each with sensitivity $\Delta$, privacy parameter $\epsilon$
	\For {$i=1,\ldots,m$}
	\State Compute $f_i(X)$
	\State Sample $Z_i \sim \Lap(\frac{\Delta}{\epsilon})$
	\EndFor
	\State Output $i^* =\underset{i \in [m]}{\mathrm{argmax}} \left(f_i(X) + Z_{i} \right)$
\end{algorithmic}
\end{algorithm}
\end{minipage}
}

\begin{theorem}[\cite{dwork2014algorithmic}]
\ReportMax\ is $(\epsilon, 0)$-differentially private. 
\end{theorem}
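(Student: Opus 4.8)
The plan is to fix an arbitrary pair of neighboring databases $X, X'$ and a single target index $i \in [m]$, and to show that $\Pr[\text{output} = i \text{ on } X] \le e^\epsilon \Pr[\text{output} = i \text{ on } X']$. Because the output space $[m]$ is discrete, once this inequality is established for every singleton it extends to an arbitrary output set $\mathcal{S} \subseteq \mathcal{R}$ simply by summing, $\Pr[\mathcal{M}(X) \in \mathcal{S}] = \sum_{i \in \mathcal{S}} \Pr[\text{output} = i] \le e^\epsilon \sum_{i \in \mathcal{S}} \Pr[\text{output} = i \text{ on } X']$, which is exactly \Cref{def.dp} with $\delta = 0$. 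The core device is to condition on the noise drawn for every query other than the $i$-th: writing $Z_{-i} = \set{Z_j : j \ne i}$, I will argue that once $Z_{-i}$ is fixed, the event that $i$ is selected becomes a one-dimensional threshold event in the single variable $Z_i$, which can then be compared cleanly across the two databases.

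Concretely, with $Z_{-i}$ held fixed, index $i$ is the $\argmax$ precisely when $f_i(X) + Z_i \ge f_j(X) + Z_j$ for all $j \ne i$ (ties occur with probability zero and are discarded). Equivalently, $i$ is selected iff $Z_i \ge \tau(X)$, where $\tau(X) := \max_{j \ne i}\bigl(f_j(X) + Z_j - f_i(X)\bigr)$. The crux is to bound how far this threshold can move when passing from $X$ to $X'$. Applying the sensitivity bound $\abs{f_j(X) - f_j(X')} \le \Delta$ simultaneously to the $j$-th query (in the $+f_j$ term) and the $i$-th query (in the $-f_i$ term), and tracking the direction of each contribution, I aim to establish $\tau(X') \le \tau(X) + \Delta$, so that the selection event under $X$ is contained, after a shift of $\Delta$, in the selection event under $X'$.

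Given this threshold shift, it remains to compare $\Pr[Z_i \ge \tau(X)]$ with $\Pr[Z_i \ge \tau(X) + \Delta]$, and this is exactly where the Laplace noise is used. For $Z_i \sim \Lap(\Delta/\epsilon)$ the pointwise density ratio is $\exp\!\bigl(\tfrac{\epsilon}{\Delta}(\abs{z+\Delta} - \abs{z})\bigr) \le e^\epsilon$ by the (reverse) triangle inequality, so integrating the tail gives $\Pr[Z_i \ge t] \le e^\epsilon \Pr[Z_i \ge t + \Delta]$ for every real $t$. Combining this conditional bound with the observation that $Z_{-i}$ has the same law in both runs, I then integrate over $Z_{-i}$ to obtain $\Pr[\text{output} = i \text{ on } X] \le e^\epsilon \Pr[\text{output} = i \text{ on } X']$, completing the per-index bound.

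The main obstacle is the threshold-shift step, namely pinning down the correct constant in $\tau(X') \le \tau(X) + \Delta$. A naive bound that treats the $+f_j$ and $-f_i$ perturbations independently only yields a shift of $2\Delta$, which in turn gives $2\epsilon$-differential privacy; recovering the sharp factor $e^\epsilon$ requires exploiting how a single changed entry moves $f_i$ and $f_j$ in a coordinated direction rather than adversarially. Once this bound is secured, the remaining ingredients --- reducing output sets to singletons, discarding measure-zero ties, and the Laplace tail-ratio estimate --- are routine.
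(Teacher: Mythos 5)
Your skeleton---condition on $Z_{-i}$, reduce selection of index $i$ to a one-dimensional threshold event in $Z_i$, bound the threshold shift between $X$ and $X'$, and finish with the Laplace tail ratio $\Pr[Z_i \ge t] \le e^\eps \Pr[Z_i \ge t+\Delta]$---is exactly the structure of the proof the paper gives (as the proof of Theorem~\ref{thm.assumption1priv}, where \ReportMax\ is instantiated with the queries $\ell(k)$ and $Z_i^*$ plays the role of your $\tau(X)$). But the one step you defer, $\tau(X') \le \tau(X) + \Delta$, is precisely the content of the theorem, and as you state it---from the sensitivity bound alone---it is false. If the changed entry can push $f_j$ up by $\Delta$ and $f_i$ down by $\Delta$ simultaneously (e.g.\ $f_i(X)=x_1$, $f_j(X)=-x_1$, both sensitivity $\Delta$), then $\tau(X') = \tau(X) + 2\Delta$ is attained, and the argument only yields $2\eps$-differential privacy. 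So this is not a constant you can recover by ``tracking directions'' more carefully in the abstract; you need an additional structural property of the query family, namely that a single entry change moves all the queries it affects in the \emph{same} direction (monotonicity). This is why Dwork--Roth prove the claim for counting queries under add/remove neighbors, and it is what the paper verifies for its own queries: by \eqref{eqnThm1prof2}, changing entry $j$ gives $\ell'(k) = \ell(k) + \Delta\,\mathbb{I}\{j \ge k\}$ with one common signed shift $\Delta \in [-A, A]$, so every affected query moves by the same amount in the same direction.

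With that property in hand, the paper closes the step by a two-case analysis on the sign of $\Delta$: if $\Delta < 0$ then $\ell'(k) \le \ell(k)$ for all $k$ while $\ell'(i) \ge \ell(i) - A$; if $\Delta \ge 0$ then $\ell'(i) \ge \ell(i)$ while $\ell'(k) \le \ell(k) + A$. In either case only one side of the comparison degrades, by at most $A$, so $Z_i \ge Z_i^* + A$ suffices for $i$ to win on $X'$, and the Laplace ratio then gives the clean factor $e^\eps$. To repair your write-up you should (i) state the monotonicity/coordination property you are relying on, (ii) prove it for the query family at hand (for the partial log-likelihoods it is immediate from the telescoping form of $\ell(k)$), and (iii) carry out the case split; the remaining ingredients (summing over singletons, discarding ties, the tail-ratio computation) are fine as you have them.
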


The \AboveThresh\ algorithm, first introduced by \cite{DNRRV09} and refined to its current form by \cite{dwork2014algorithmic}, takes in a potentially unbounded stream of queries, compares the answer of each query to a fixed noisy threshold, and halts when it finds a noisy answer that exceeds the noisy threshold.  We use this algorithm as a framework for our online private change-point detector \Online\ in Section \ref{s.online} when new data points arrive online in a streaming fashion.

{\centering
\begin{minipage}{\linewidth}
\begin{algorithm}[H]
\caption{Above Noisy Threshold: \AboveThresh($X, \Delta, \{f_1, f_2, \ldots \}, T, \epsilon$) }
\begin{algorithmic}
\State \textbf{Input:} database $X$, stream of queries $\{f_1, f_2, \ldots \}$ each with sensitivity $\Delta$, threshold $T$, privacy parameter $\epsilon$
\State Let $\hat{T} = T + \Lap(\frac{2\Delta}{\epsilon})$
\For {each query $i$}
  \State Let $Z_i \sim \Lap(\frac{4\Delta}{\epsilon})$
  \If {$f_i(X)+Z_i>\hat{T}$}
    \State Output $a_i=\top$
    \State Halt
  \Else
    \State Output $a_i=\bot$
  \EndIf
\EndFor
\end{algorithmic}
\end{algorithm}
\end{minipage}

\begin{theorem}[\cite{DNRRV09}]
\AboveThresh\ is $(\epsilon, 0)$-differentially private. 
\end{theorem}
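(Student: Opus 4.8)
The plan is to fix two neighboring databases $X,X'$ and an arbitrary output transcript and bound the ratio of the probabilities of producing that transcript directly from the Laplace densities. Because \AboveThresh\ halts the first time a noisy query exceeds the noisy threshold, every halting output has the form $a=(\bot,\dots,\bot,\top)$: a run of $k-1$ "below" symbols followed by a single "above" symbol at some step $k$ (plus the degenerate all-$\bot$ transcript when no query ever fires). The first step is therefore to write $\Pr[\AboveThresh(X)=a]$ as a single integral over the threshold noise $\rho$ (where $\hat T=T+\rho$ and $\rho\sim\Lap(2\Delta/\eps)$), with the per-query noises integrated out inside. Conditioned on $\rho$, the events are independent across queries, so the integrand factors into a product of "stay below" probabilities $\Pr_{Z_i}[f_i(X)+Z_i<T+\rho]$ for $i<k$ and one "exceed" probability $\Pr_{Z_k}[f_k(X)+Z_k\ge T+\rho]$. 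The budget is split unevenly: $\eps/2$ for the threshold noise and $\eps/2$ for the single firing query, while the $k-1$ "stay below" factors are made free by a coupling.

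The heart of the argument is a change of variables in the threshold integral that shifts $\rho$ up by $\Delta$. I would substitute $w=\rho+\Delta$ and then compare the $X$-integrand at $w-\Delta$ with the $X'$-integrand at $w$ factor by factor. Three bounds are needed. First, the threshold density satisfies $p_\rho(w-\Delta)\le e^{\eps/2}p_\rho(w)$, since $\rho$ is Laplace with scale $2\Delta/\eps$ and the shift is $\Delta$. Second, each "stay below" factor is dominated outright: because $f_i(X')\le f_i(X)+\Delta$, the event $\{f_i(X)+Z_i<T+(w-\Delta)\}$ is contained in $\{f_i(X')+Z_i<T+w\}$, so $\Pr_{Z_i}[\text{stay}(X,w-\Delta)]\le\Pr_{Z_i}[\text{stay}(X',w)]$ with no multiplicative cost. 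This is exactly why raising the threshold lets all the continue-answers come for free. Third, the firing factor costs $e^{\eps/2}$: writing the exceed event as a tail event for $Z_k$, passing from $X$ at $w-\Delta$ to $X'$ at $w$ increases the required tail cutoff by $\Delta+(f_k(X)-f_k(X'))\in[0,2\Delta]$, and since $Z_k\sim\Lap(4\Delta/\eps)$, the Laplace tail-ratio bound gives a factor at most $e^{(2\Delta)\eps/(4\Delta)}=e^{\eps/2}$.

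Multiplying the three bounds inside the integral yields $\Pr[\AboveThresh(X)=a]\le e^{\eps}\Pr[\AboveThresh(X')=a]$ for every halting transcript; the all-$\bot$ transcript is handled by the same substitution using only the density and domination bounds, which costs only $e^{\eps/2}\le e^{\eps}$. Since this holds pointwise for every output, summing over any measurable set of outputs gives $(\eps,0)$-differential privacy. The main obstacle, and the one genuinely clever point, is the asymmetric noise calibration together with the monotone coupling: one must see that shifting the threshold up by $\Delta$ makes the arbitrarily long list of "below" answers free (event containment, probabilities at most $1$), so that the only places privacy is actually spent are the threshold itself and the single query that fires. Getting the constants right in the firing step is the delicate part, since both $f_k$ and the shifted threshold can each move by $\Delta$, forcing a total shift of up to $2\Delta$ and hence the scale $4\Delta/\eps$ on the query noise to keep its cost at $e^{\eps/2}$.
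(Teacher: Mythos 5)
Your proof is correct. The paper does not prove this theorem --- it is cited from the sparse-vector literature --- but your argument is exactly the standard one for \AboveThresh: the change of variables shifting the noisy threshold by $\Delta$, the free monotone domination of all the $\bot$ answers, and the $e^{\eps/2}\cdot e^{\eps/2}$ split between the threshold noise (scale $2\Delta/\eps$, shift $\Delta$) and the single firing query (scale $4\Delta/\eps$, cutoff shift in $[0,2\Delta]$), so it matches the proof the citation points to, with the constants handled correctly.
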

}

\begin{theorem}[\cite{DNRRV09}]\label{thm.atacc}
For any sequence of $m$ queries $f_1,\ldots, f_m$ with sensitivity $\Delta$ such that $|\{i<m:f_i(X)\ge T-\alpha\}|=0$, \AboveThresh\ outputs with probability at least $1-\beta$ a stream of $a_1, \ldots, a_m \in \{\top, \bot\}$ such that $a_i=\bot$ for every $i\in[m]$ with $f(i)<T-\alpha$ and $a_i=\top$ for every $i\in[m]$ with $f(i)>T+\alpha$ as long as
\begin{align*}\alpha\ge \frac{8\Delta\log (2m/\beta)}{\epsilon}. \end{align*}
\end{theorem}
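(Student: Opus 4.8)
The plan is to follow the standard accuracy argument for the sparse-vector mechanism: isolate a single ``good event'' on which \emph{all} the additive Laplace noise is simultaneously small, show that on this event the algorithm never errs, and then lower-bound the probability of the good event by a union bound over Laplace tails. First I would name the noise terms. Writing $\rho = \hat{T} - T \sim \Lap(2\Delta/\epsilon)$ for the threshold perturbation and $Z_1,\ldots,Z_m \sim \Lap(4\Delta/\epsilon)$ for the per-query perturbations, I define
\[ E = \left\{ |\rho| < \frac{\alpha}{2} \right\} \cap \bigcap_{i\in[m]} \left\{ |Z_i| < \frac{\alpha}{2} \right\}. \]
The goal is to prove that $E$ implies correct outputs, and that $\Pr[E] \ge 1-\beta$ under the stated bound on $\alpha$.

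Next I would verify correctness on $E$, recalling that the mechanism outputs $\top$ at query $i$ exactly when $f_i(X) + Z_i > \hat{T} = T + \rho$. For the ``no false positive'' direction, take any $i$ with $f_i(X) < T - \alpha$; on $E$ we get $f_i(X) + Z_i < (T-\alpha) + \alpha/2 = T - \alpha/2 < T + \rho$, so the test fails and $a_i = \bot$. For the ``no false negative'' direction, if $f_m(X) > T + \alpha$, then on $E$ we have $f_m(X) + Z_m > (T+\alpha) - \alpha/2 = T + \alpha/2 > T + \rho$, so $a_m = \top$. Here is the one place where I must invoke the structural hypothesis: because the algorithm halts at the first $\top$, the per-query claims would be vacuous if it aborted early. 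The assumption $|\{i<m : f_i(X) \ge T-\alpha\}| = 0$ guarantees every $i < m$ satisfies $f_i(X) < T - \alpha$, so by the first case each such query yields $\bot$ on $E$ and the loop actually reaches query $m$, making all the ``for every $i\in[m]$'' guarantees meaningful.

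Finally I would bound the probability. Using the Laplace tail identity $\Pr[|\Lap(b)| \ge t] = e^{-t/b}$, I compute $\Pr[|\rho| \ge \alpha/2] = e^{-\alpha\epsilon/(4\Delta)}$ and $\Pr[|Z_i| \ge \alpha/2] = e^{-\alpha\epsilon/(8\Delta)}$. Bounding the (smaller) threshold term by the per-query term and taking a union bound yields $\Pr[\bar{E}] \le (1+m)\, e^{-\alpha\epsilon/(8\Delta)}$. Substituting $\alpha \ge 8\Delta\log(2m/\beta)/\epsilon$ makes each exponential at most $\beta/(2m)$, so $\Pr[\bar{E}] \le (1+m)\beta/(2m) = \left(\tfrac12 + \tfrac{1}{2m}\right)\beta \le \beta$ for $m \ge 1$, which is exactly where the factor $8$ in the hypothesis originates.

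The computations are routine Laplace-tail estimates, so the only genuinely delicate point is the bookkeeping described above: reconciling the per-query output claims with the mechanism's halting behavior. I expect that to be the main thing to get right, and I would state the use of the structural hypothesis explicitly when chaining together the $\bot$ outputs for $i<m$.
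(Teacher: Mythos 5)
The paper states this theorem as a citation to prior work (\cite{DNRRV09}, in the form refined by \cite{dwork2014algorithmic}) and gives no proof of its own, so there is nothing internal to compare against. Your argument is the standard accuracy proof for \AboveThresh\ and it checks out: the good event with $|\rho|<\alpha/2$ and $|Z_i|<\alpha/2$ correctly forces $\bot$ whenever $f_i(X)<T-\alpha$ and $\top$ whenever $f_i(X)>T+\alpha$, the structural hypothesis is invoked in exactly the right place to guarantee the loop reaches query $m$, and the Laplace tail computation $(1+m)e^{-\alpha\epsilon/(8\Delta)}\le\beta$ recovers the stated constant.
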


\subsection{Concentration inequalities} 

Our proofs will use the following bounds. 

	\begin{lemma}[Ottaviani's inequality \cite{van1996weak}] \label{lem.ott}
 For independent random variables $U_1,\ldots,U_m$, for $S_k=\sum_{i\in[k]}U_i$ for $k\in[m]$, and for $\lambda_1,\lambda_2>0$, we have
		$$\Pr\left[\max_{1 \le k\le m}|S_k|>\lambda_1+\lambda_2\right]\le \frac{\Pr \left[|S_m|>\lambda_1\right]}{1-\max_{1 \le k\le m} \Pr \left[|S_m-S_k|>\lambda_2\right]}.$$
	\end{lemma}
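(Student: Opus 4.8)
The plan is to prove this Lévy-type maximal inequality by a first-passage (stopping time) decomposition, which is the standard technique for such bounds. First I would introduce the stopping time
$$\tau = \min\{k \in [m] : |S_k| > \lambda_1 + \lambda_2\},$$
with the convention $\tau = \infty$ when no such index exists, so that the event driving the left-hand side decomposes as a disjoint union,
$$\Bigl\{\max_{1 \le k \le m}|S_k| > \lambda_1 + \lambda_2\Bigr\} = \bigcup_{k=1}^m \{\tau = k\}.$$
The key structural observation is that $\{\tau = k\}$ is determined entirely by $U_1,\dots,U_k$ (it encodes that $|S_j| \le \lambda_1 + \lambda_2$ for all $j < k$ and $|S_k| > \lambda_1 + \lambda_2$), whereas the tail increment $S_m - S_k = \sum_{i=k+1}^m U_i$ is determined by $U_{k+1},\dots,U_m$. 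By the assumed independence of the $U_i$, the events $\{\tau = k\}$ and $\{|S_m - S_k| \le \lambda_2\}$ are therefore independent.

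Next I would connect each block of the decomposition to the single anchoring event $\{|S_m| > \lambda_1\}$ using the triangle inequality. On $\{\tau = k\}$ we have $|S_k| > \lambda_1 + \lambda_2$, so if additionally $|S_m - S_k| \le \lambda_2$, then $|S_m| \ge |S_k| - |S_m - S_k| > \lambda_1$. This gives the containment $\{\tau = k\} \cap \{|S_m - S_k| \le \lambda_2\} \subseteq \{|S_m| > \lambda_1\}$, and since the events $\{\tau = k\}$ are disjoint across $k$, summing yields
$$\Pr[|S_m| > \lambda_1] \ge \sum_{k=1}^m \Pr\bigl[\{\tau = k\} \cap \{|S_m - S_k| \le \lambda_2\}\bigr].$$

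I would then invoke independence to factor each summand as $\Pr[\tau = k]\,\Pr[|S_m - S_k| \le \lambda_2]$, lower bound the second factor uniformly by $1 - \max_{1 \le k \le m}\Pr[|S_m - S_k| > \lambda_2]$, and pull this constant out of the sum. Since $\sum_{k=1}^m \Pr[\tau = k] = \Pr[\max_{1 \le k \le m}|S_k| > \lambda_1 + \lambda_2]$, rearranging gives exactly the claimed bound.

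I expect the only delicate points to be bookkeeping rather than computation. One must verify carefully that $\{\tau = k\}$ is indeed measurable with respect to $\sigma(U_1,\dots,U_k)$ so that the factorization step is legitimate, and one should note that the denominator $1 - \max_k \Pr[|S_m - S_k| > \lambda_2]$ is always nonnegative (each probability is at most $1$): when it is zero the right-hand side is vacuously infinite and the bound is trivial, and when it is positive the final division preserves the inequality's direction. The main conceptual obstacle is articulating this independence/measurability step cleanly; everything else follows from the triangle inequality and a rearrangement.
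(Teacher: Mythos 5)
Your proof is correct: the first-passage decomposition via the stopping time $\tau$, the triangle-inequality containment $\{\tau=k\}\cap\{|S_m-S_k|\le\lambda_2\}\subseteq\{|S_m|>\lambda_1\}$, and the independence-based factorization constitute the standard argument for Ottaviani's inequality, and your handling of the degenerate denominator is fine. The paper itself gives no proof of this lemma---it is imported directly from \cite{van1996weak}---and your argument is essentially the textbook proof found there, so there is nothing to contrast.
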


If we additionally assume the $U_j$ above are i.i.d. with mean 0 and take values from an interval of bounded length $L$, we can apply Hoeffding's inequality for the following corollary:

\begin{corollary}\label{cor.ott} For independent and identically distributed random variables $U_1,\dots,U_m$ with mean zero strictly bounded by an interval of length $L$ and for $S_k=\sum_{i\in[k]}U_i$ for $k\in[m]$, and for $\lambda_1,\lambda_2>0$, we have
$$
\Pr[\max_{k\in[m]}|S_k|>\lambda_1+\lambda_2]\le \frac{2\exp(-2\lambda_1^2/(mL^2))}{1-2\exp(-2\lambda_2^2/(mL^2))}.$$ 
\end{corollary}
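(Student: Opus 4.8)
The plan is to apply Ottaviani's inequality (Lemma~\ref{lem.ott}) directly and then control its numerator and denominator separately using Hoeffding's inequality. The starting point is the observation that, because the $U_i$ are i.i.d., mean-zero, and each supported on an interval of length $L$, Hoeffding's inequality applies to \emph{any} partial sum of these variables: for a sum $\Sigma$ of $t$ of them, $\Pr[|\Sigma| > \lambda] \le 2\exp(-2\lambda^2/(tL^2))$, since the relevant range contributions sum to $tL^2$. Both pieces of the Ottaviani bound are of this form.

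First I would bound the numerator $\Pr[|S_m| > \lambda_1]$. Here $S_m$ is a sum of exactly $m$ terms, so Hoeffding gives $\Pr[|S_m| > \lambda_1] \le 2\exp(-2\lambda_1^2/(mL^2))$, which is exactly the claimed numerator. For the denominator, the key observation is that $S_m - S_k = \sum_{i=k+1}^m U_i$ is a sum of only $m-k$ i.i.d.\ copies, so Hoeffding yields $\Pr[|S_m - S_k| > \lambda_2] \le 2\exp(-2\lambda_2^2/((m-k)L^2))$. Since $m - k \le m$, the exponent is no smaller (in absolute value) than $2\lambda_2^2/(mL^2)$, so each such term, and hence the maximum over $1 \le k \le m$, is bounded by $2\exp(-2\lambda_2^2/(mL^2))$. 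Consequently $1 - \max_{k}\Pr[|S_m - S_k| > \lambda_2] \ge 1 - 2\exp(-2\lambda_2^2/(mL^2))$.

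Finally I would assemble these into the right-hand side of Lemma~\ref{lem.ott}: enlarging the numerator and shrinking the (positive) denominator of a nonnegative ratio only increases it, so substituting the two Hoeffding bounds gives the stated inequality. I expect the only real subtlety—and the step deserving the most care—to be bookkeeping on the directions of the inequalities: the numerator bound must be used as an upper bound, whereas the maximum-probability estimate must be used to produce a \emph{lower} bound on the denominator $1 - \max_k \Pr[\cdot]$. It is also worth noting that the resulting bound is only meaningful when $2\exp(-2\lambda_2^2/(mL^2)) < 1$, i.e.\ when the denominator is positive; otherwise the trivial bound that probabilities are at most $1$ already suffices.
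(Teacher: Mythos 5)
Your proposal is correct and follows exactly the route the paper intends: the corollary is stated as a direct consequence of Ottaviani's inequality (Lemma~\ref{lem.ott}) with Hoeffding applied to the numerator and to each $S_m-S_k$ in the denominator, using $m-k\le m$ to get a uniform bound. Your remark about the sign of the denominator is a sensible caveat that the paper leaves implicit.
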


\begin{lemma}[Bernstein inequality \cite{van1996weak}] \label{lem.bern}
	Let $Y_1,\ldots,Y_n$ be independent random variables with mean zero such that $\mathbb{E}\left[ e^{|Y_i|/M}-1-\frac{|Y_i|}{M}\right] M^2 \le \frac{1}{2}v_i$ for constants $M$ and $v_i$ and for  $i\in [n]$. Then
	$$\Pr[|Y_1+\ldots+Y_n|>x]\le 2\exp\left(-\frac{1}{2}\frac{x^2}{v+Mx}\right),$$
	for $v\ge v_1+\ldots+v_n.$
\end{lemma}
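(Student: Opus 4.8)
The plan is to prove this by the classical Chernoff / exponential-moment method, since the stated hypothesis is precisely a Bernstein-type moment condition. First I would set $S_n = Y_1 + \cdots + Y_n$ and, for a parameter $t>0$ to be chosen later, invoke the exponential Markov inequality together with independence of the $Y_i$ to get
$$\Pr[S_n > x] \le e^{-tx}\,\mathbb{E}[e^{tS_n}] = e^{-tx}\prod_{i=1}^n \mathbb{E}[e^{tY_i}].$$
The whole argument then reduces to controlling each factor $\mathbb{E}[e^{tY_i}]$.

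Next I would translate the hypothesis into polynomial moment bounds. Expanding $e^{|Y_i|/M} - 1 - |Y_i|/M = \sum_{k\ge 2} |Y_i|^k/(k!\,M^k)$ and taking expectations, the assumed inequality $\mathbb{E}[e^{|Y_i|/M} - 1 - |Y_i|/M]\,M^2 \le \tfrac12 v_i$ becomes a sum of nonnegative terms bounded by $\tfrac12 v_i$, so each term is individually controlled and yields $\mathbb{E}[|Y_i|^k] \le \tfrac{k!}{2}\, v_i\, M^{k-2}$ for every $k\ge 2$. I would then bound the per-variable moment generating function for $0<t<1/M$ by expanding $e^{tY_i}$ in a power series, discarding the linear term (which vanishes since $\mathbb{E}[Y_i]=0$), inserting the moment bounds, and summing the resulting geometric series:
$$\mathbb{E}[e^{tY_i}] \le 1 + \frac{t^2 v_i}{2}\sum_{k\ge 2}(tM)^{k-2} = 1 + \frac{t^2 v_i}{2(1-tM)} \le \exp\!\left(\frac{t^2 v_i}{2(1-tM)}\right),$$
where the last step uses $1+u\le e^u$.

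Taking the product over $i$ and using $v \ge \sum_i v_i$ gives $\prod_i \mathbb{E}[e^{tY_i}] \le \exp\big(t^2 v/(2(1-tM))\big)$, hence $\Pr[S_n > x] \le \exp\big(-tx + t^2 v/(2(1-tM))\big)$. I would then optimize the exponent with the standard choice $t = x/(v+Mx)$, which satisfies $tM<1$ precisely because $v>0$; substituting (noting $1-tM = v/(v+Mx)$) collapses the exponent to exactly $-\tfrac12\, x^2/(v+Mx)$. Running the identical argument on $-Y_1,\dots,-Y_n$ bounds the lower tail $\Pr[S_n < -x]$ by the same quantity, and a union bound over the two tails produces the factor of $2$ in the claimed two-sided estimate.

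The only real obstacle is the bookkeeping around the admissible range of $t$: one must ensure the geometric series converges ($tM<1$) and verify that the optimizing value $t=x/(v+Mx)$ lies in that range. Everything else — extracting the factorial moment bounds and the final algebraic simplification of the exponent — is routine manipulation of the exponential series.
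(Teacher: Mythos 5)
The paper does not prove this lemma at all---it is imported verbatim from van der Vaart and Wellner \cite{van1996weak} (their Lemma 2.2.11), so there is no in-paper argument to compare against. Your Chernoff/moment-generating-function proof is the standard one and is correct: the term-by-term extraction of $\mathbb{E}[|Y_i|^k]\le \tfrac{k!}{2}v_iM^{k-2}$ from the hypothesis, the geometric-series bound on $\mathbb{E}[e^{tY_i}]$ for $tM<1$, and the choice $t=x/(v+Mx)$ all check out, and the algebra does collapse the exponent to $-\tfrac12 x^2/(v+Mx)$. The only detail worth a word in a written-up version is justifying the interchange of expectation with the power series for $e^{tY_i}$ (dominated convergence via $e^{t|Y_i|}\le e^{|Y_i|/M}$, which is integrable by hypothesis), and the degenerate case $v=0$, both of which are routine.
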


\begin{corollary}\label{cor.bern}
	For independent and identically distributed random variables $Y_1,\ldots,Y_n$ with  mean zero such that $\mathbb{E}\left[ e^{|Y_i|}-1-|Y_i|\right] \le \frac{1}{2}v$, for constant $v$ and  $i\in[n]$, and for $S_k=\sum_{i\in[k]}Y_i$ for $k\in[m]$, and for $\lambda_1,\lambda_2>0$, we have
	$$\Pr[\max_{k\in[m]}|S_k|>\lambda_1+\lambda_2]\le \frac{2\exp(-\lambda_1^2/(2mv+2\lambda_1))}{1-2\exp(-\lambda_2^2/(2mv+2\lambda_2))}.$$ 
\end{corollary}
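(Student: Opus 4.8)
The plan is to mirror the derivation of Corollary~\ref{cor.ott}, replacing the Hoeffding step with the Bernstein bound of Lemma~\ref{lem.bern}. First I would apply Ottaviani's inequality (Lemma~\ref{lem.ott}) directly to the partial sums $S_1,\dots,S_m$, which immediately gives
\[
\Pr\left[\max_{1\le k\le m}|S_k|>\lambda_1+\lambda_2\right]\le \frac{\Pr[|S_m|>\lambda_1]}{1-\max_{1\le k\le m}\Pr[|S_m-S_k|>\lambda_2]}.
\]
It then remains to control the numerator and each term appearing in the denominator using Bernstein's inequality, after which the result follows by substitution.

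For the numerator, I would invoke Lemma~\ref{lem.bern} with $M=1$ and $v_i=v$ for each $i$. The crucial check is that the moment hypothesis of the corollary, $\mathbb{E}[e^{|Y_i|}-1-|Y_i|]\le \tfrac12 v$, is exactly the hypothesis of Lemma~\ref{lem.bern} specialized to $M=1$, so the lemma applies with total variance parameter $mv$. Taking $x=\lambda_1$ yields
\[
\Pr[|S_m|>\lambda_1]\le 2\exp\left(-\frac{1}{2}\frac{\lambda_1^2}{mv+\lambda_1}\right)=2\exp\left(-\frac{\lambda_1^2}{2mv+2\lambda_1}\right),
\]
which is precisely the numerator of the claimed bound.

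For the denominator, observe that $S_m-S_k=\sum_{i=k+1}^m Y_i$ is a sum of $m-k$ i.i.d.\ mean-zero variables, each satisfying the same moment condition, so Bernstein's inequality applies with total variance parameter $(m-k)v$ and $x=\lambda_2$, giving $\Pr[|S_m-S_k|>\lambda_2]\le 2\exp(-\lambda_2^2/(2(m-k)v+2\lambda_2))$. Since $m-k\le m$, the denominator of the exponent satisfies $2(m-k)v+2\lambda_2\le 2mv+2\lambda_2$, so each term is bounded uniformly in $k$:
\[
\max_{1\le k\le m}\Pr[|S_m-S_k|>\lambda_2]\le 2\exp\left(-\frac{\lambda_2^2}{2mv+2\lambda_2}\right).
\]
Plugging the numerator and denominator bounds back into the Ottaviani inequality completes the proof.

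I expect the derivation to be essentially mechanical; the only points requiring care are (i) confirming that the corollary's moment hypothesis matches Lemma~\ref{lem.bern} exactly under the choice $M=1$, and (ii) obtaining a bound on the denominator that is uniform over $k$, which relies on the monotonicity of the Bernstein exponent in the number of summands $m-k$. As in Corollary~\ref{cor.ott}, the bound is only meaningful under the implicit assumption that $1-2\exp(-\lambda_2^2/(2mv+2\lambda_2))>0$.
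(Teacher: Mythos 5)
Your derivation is correct and matches the paper's intended argument exactly: the paper states Corollary~\ref{cor.bern} without an explicit proof, as the Bernstein analogue of Corollary~\ref{cor.ott}, i.e., Ottaviani's inequality combined with Lemma~\ref{lem.bern} applied with $M=1$ and $v_i=v$ (total parameter $mv$) to the numerator and to each tail $S_m-S_k$. Your two points of care --- the match of the moment hypothesis at $M=1$ and the uniformity over $k$ of the denominator bound --- are precisely the right checks.
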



\section{Offline private change-point detection}\label{s.offline}

In this section, we investigate the differentially private change point detection problem in the setting that $n$ data points $X=\set{x_1,\dots,x_n}$ are known to the algorithm in advance. Given two hypothesized distributions $P_0$ and $P_1$, our algorithm \Offline\ privately approximates the MLE $\hat k$ of the change time $k^*$. We provide accuracy bounds for both the MLE and the output of our algorithm under two different assumptions about the distributions from which the data are drawn, summarized in \Cref{tab.offlineresults}.  

\begin{center}
\begin{minipage}{.9\linewidth}
\begin{table}[H]
\begin{center}\begin{tabular}{ccc} \hline
 Assumption & \phantom{\qquad\qquad}MLE\phantom{\qquad\qquad} & \Offline  \\ \hline 
$A:=\Delta(\ell)<\infty$ & $\frac{2A^2}{C^2}\log \frac{32}{3\beta}$ & $\max\left\{\frac{8A^2}{C^2}\log \frac{64}{3\beta},\, \frac{4A}{C\epsilon} \log \frac{16}{\beta}\right\}$ \vspace{.5em}\\
$A:=A_\delta<\infty$ & $\frac{67}{C_M^2}\log \frac{64}{3\beta}$ & $\max\left\{\frac{262}{C_M^2}\log \frac{128}{3\beta}, \frac{2A\log (16/\beta)}{C_M\eps}\right\}$ \vspace{.5em}\\ \hline
\end{tabular}
\caption{ \small Summary of non-private and private offline accuracy guarantees under $H_1$. \qquad\qquad The expressions $\Delta(\ell)$, $A_\delta$, $C$, and $C_M$ are defined in (\ref{eqnconsA}), (\ref{eqnconsA2}), (\ref{eq.C}), (\ref{eqnthm02eq3b}), resp.
}\label{tab.offlineresults}
\end{center}
\end{table}
\end{minipage}
\end{center}

The first assumption essentially requires that $P_1(x)/P_0(x)$ cannot be arbitrarily large or arbitrarily small for any $x$. We note that this assumption is not satisfied by several important families of distributions, including Gaussians. The second assumption, motivated by the $\delta>0$ relaxation of differential privacy, instead requires that the $x$ for which this log ratio exceeds some bound $A_\delta$ have probability mass at most~$\delta$.

Although the accuracy of  \Offline\ only holds under the change-point model's alternate hypothesis $H_1$, it  is $\eps$-differentially private for any {\em hypothesized} distributions $P_0,P_1$ with finite $\Delta(\ell)$ and privacy parameters $\eps>0,\delta=0$ {\em regardless of the distributions from which $X$ is drawn}. We offer a similar but somewhat weaker privacy guarantee when $\Delta(\ell)$ is infinite but $A_\delta$ is finite, which roughly states that a data point sampled from either $P_0$ or $P_1$ can be replaced with a fresh sample from either $P_0$ or $P_1$ without detection. 



\subsection{Offline algorithm}\label{sec:off1}

Our proposed offline algorithm \Offline\ applies the report noisy max algorithm  \cite{dwork2014algorithmic} to the change-point problem by adding noise to partial log-likelihood ratios $\ell(k)$ used to estimate the change point MLE $\hat k$. The algorithm chooses Laplace noise parameter $A/\eps$ depending on input hypothesized distributions $P_0,P_1$ and privacy parameters $\eps,\delta$ and then outputs
\begin{eqnarray} \label{eqn04}
\tilde{k} =\underset{1\le k \le n}{\mathrm{argmax}}\set{\ell(k)+ Z_{k}} .
\end{eqnarray}
Our algorithm can be easily modified to additionally output an approximation of $\ell(\tilde k)$ and incur $2\eps$ privacy cost by composition.

{\centering
\begin{minipage}{\linewidth}
\begin{algorithm}[H]
	\caption{Offline private change-point detector : \Offline($X, P_0, P_1,\epsilon, \delta,n$)}
	\begin{algorithmic}
	\State \textbf{Input:} database $X$, distributions $P_0,P_1$, privacy parameters $\eps,\delta$, database size $n$
		\If {$\delta=0$}
		\State Set $A= \max_{x}\ \log  \frac{P_1(x)}{P_0(x)} - \min_{x'}\ \log  \frac{P_1(x')}{P_0(x')}$ \Comment set $A=\Delta \ell$ as in \eqref{eqnconsA}
		\Else
		\State Set $A= \min \{ t \; : \; \max_{i=0,1}\Pr_{x\sim  P_i}[2|\log \frac{P_1(x)}{P_0(x)}|> t] < \delta/2 \}$ \Comment set $A= A_\delta$ as in \eqref{eqnconsA2}
		\EndIf
		\For {$k=1,\ldots,n$}
		\State Compute $\ell(k)=\sum_{i=k}^n \log \frac{P_1(x_i)}{P_0(x_i)}$
		\State Sample $Z_k \sim \Lap(\frac{A}{\epsilon})$
		\EndFor
		\State Output $\tilde{k} =\underset{1\le k \le n}{\mathrm{argmax}} \set{\ell(k)+ Z_{k} }$ \Comment Report noisy argmax
	\end{algorithmic}\label{algo.offline}
\end{algorithm}
\end{minipage}
}



In the change-point  or statistical process control (SPC) literature, when the pre- and post- change distributions are unknown in practical settings, researchers often choose hypotheses $P_0,P_1$ with the smallest justifiable 
 distance. 
While it is easier to detect and accurately estimate a larger change, larger changes are often associated with a higher-sensitivity MLE, requiring more noise (and therefore additional error) to preserve privacy. We propose that practitioners using our private change point detection algorithm choose input hypotheses accordingly. This practical setting is considered in our numerical studies, presented in Section~\ref{s.sim}. 

In the case that $\delta=0$, we sample Laplace noise directly proportional to the sensitivity of the partial log-likelihood ratios we compute: 
\begin{eqnarray}
\Delta \ell &= & \max_{\substack{k\in[n],X,X'\in\R^n \\ \norm{X - X'}_1=1}} \norm{\ell(k,X)- \ell(k,{X'})}_1 = \max_{x\in\R}\ \log  \frac{P_1(x)}{P_0(x)} - \min_{x'\in\R}\ \log  \frac{P_1(x')}{P_0(x')}.  \label{eqnconsA}
\end{eqnarray}


The algorithm should not be invoked with $\delta=0$ unless $\Delta(\ell)$ is finite. In the case that $\ell$ has infinite sensitivity, we instead allow the user to select a privacy parameter $\delta>0$ and identify a value $A_\delta$ for which most values of $x\sim P_0,P_1$ have bounded log-likelihood ratio:
\begin{eqnarray} \label{eqnconsA2}
A_\delta = \min \left\{ t \; : \; \max_{i=0,1}\Pr_{x\sim  P_i}\left[2\abs{\log \frac{P_1(x)}{P_0(x)}}> t\right] < \delta/2 \right\}. 
\end{eqnarray} 

As a concrete canonical example, $\Delta(\ell)$ is unbounded for two Gaussian distributions, but $A_\delta$ is bounded for Gaussians with different means as follows: 

\begin{example}\label{ex.gauss}
 For $P_0=\mathcal{N}(0,1)$, $P_1=\mathcal{N}(\mu,1)$, and $\delta>0$, we have 
 $A_\delta=2\mu[\Phi^{-1}(1-\delta/2)+\mu/2],$
 where $\Phi$ is the cumulative distribution function (CDF) of the standard normal distribution.
\end{example}



\subsection{Theoretical properties under the uniform bound assumption} \label{sec:off2}
In this subsection, we prove privacy and accuracy of \Offline\ when $\delta=0$ and $P_0,P_1$ are such that $\Delta(\ell)$ is finite. Note that if $\Delta(\ell)$ is infinite, then the algorithm will simply add noise with infinite scale and will still be differentially private.


\begin{theorem}\label{thm.assumption1priv} For arbitrary data $X$, \Offline$(X,P_0,P_1,\eps,0)$ is $(\eps,0)$-differentially private. 
\end{theorem}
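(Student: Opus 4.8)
The plan is to reduce the privacy of \Offline\ to the already-established privacy of \ReportMax. The key observation is that \Offline\ is, up to a purely syntactic relabeling, an instantiation of \ReportMax\ applied to the family of partial log-likelihood queries $\{\ell(1,\cdot),\dots,\ell(n,\cdot)\}$ with common sensitivity parameter $A=\Delta(\ell)$ and privacy parameter $\eps$: both algorithms compute each query exactly, add i.i.d.\ noise $Z_k\sim\Lap(A/\eps)$, and report the index of the largest noisy value. So the first step is simply to verify that the noise scale $A/\eps$ used by \Offline\ in the $\delta=0$ branch matches the $\Delta/\eps$ required by \ReportMax.

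Second, I would confirm that the queries actually satisfy the sensitivity hypothesis of \ReportMax. By definition \eqref{eqnconsA}, $A=\Delta\ell$ is exactly $\max_k \max_{X\sim X'}|\ell(k,X)-\ell(k,X')|$, the worst-case sensitivity over all $k$ and all neighboring pairs. Hence every query $\ell(k,\cdot)$ has sensitivity at most $A$, which is precisely what \ReportMax\ needs. It is worth emphasizing here (as the paper's discussion above already flags) that this is a statement about the function $\ell$ viewed combinatorially on arbitrary input databases in $\R^n$: the bound in \eqref{eqnconsA} follows because changing a single coordinate $x_i$ to $x_i'$ changes $\ell(k,X)$ by at most $\log\frac{P_1(x_i)}{P_0(x_i)}-\log\frac{P_1(x_i')}{P_0(x_i')}$, whose absolute value is bounded by the range $\max_x\log\frac{P_1(x)}{P_0(x)}-\min_{x'}\log\frac{P_1(x')}{P_0(x')}=A$. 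No distributional assumption on $X$ enters, which is exactly why the privacy guarantee holds for arbitrary data $X$.

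Third, with both the noise scale and the sensitivity bound matched, I would invoke the theorem that \ReportMax\ is $(\eps,0)$-differentially private, and conclude that \Offline$(X,P_0,P_1,\eps,0)$ inherits the same guarantee. The output distributions are identical, so differential privacy transfers verbatim.

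The main obstacle is not any hard inequality but rather making the reduction airtight: one must check that $A=\Delta(\ell)$ is genuinely a valid (finite) upper bound on the sensitivity of \emph{each} query simultaneously, since \ReportMax\ assumes a single common sensitivity $\Delta$. The case $\Delta(\ell)=\infty$ requires a brief remark—then the noise scale is infinite, the $\argmax$ is over i.i.d.\ infinitely-scaled Laplace draws independent of the data, and privacy holds trivially (indeed the output is data-independent)—but this degenerate case is already noted in the surrounding text and needs only a sentence.
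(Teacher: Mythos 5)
Your proposal is correct and follows essentially the same route as the paper: the paper's own proof begins by stating that privacy follows by instantiating \ReportMax\ with the queries $\ell(k)$ of sensitivity $A=\Delta(\ell)$, and then merely spells out the standard \ReportMax\ argument for completeness. Your verification that each $\ell(k,\cdot)$ has sensitivity at most $A$ for arbitrary databases, and your remark on the degenerate case $\Delta(\ell)=\infty$, match the paper's treatment.
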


\begin{proof} Privacy follows by instantiation of \ReportMax\ \cite{dwork2014algorithmic} with queries $\ell(k)$ for $k\in[n]$, which have sensitivity $A=\Delta(\ell)$; this proof is included for completeness.

		
	Fix any two neighboring databases $X,X'$ that differ on index $j$. For any $k\in[n]$, denote the respective partial log-likelihood ratios as $\ell(k)$ and $\ell'(k)$. By (\ref{eqn01}), we have
	\begin{equation} \label{eqnThm1prof2}
	\ell^\prime(k)=\ell(k)+\Delta \mathbb{I}\{j \ge k\} \qquad \mbox{ with } \quad \Delta =\log\frac{P_1(x_j^\prime)}{P_0(x_j^\prime)}-\log\frac{P_1(x_j)}{P_0(x_j)}.
	\end{equation}
	
	Next, for a given $1 \le i \le n$, fix $Z_{-i}$, a draw from $[\Lap(A/\epsilon)]^{n-1}$ used for all the noisy log likelihood ratio values except the $i$th one. We will bound from above and below the ratio of the probabilities that the algorithm outputs $\tilde k = i$ on inputs $X$ and $X'$. 
	Define the minimum noisy value in order for $i$ to be select with $X$:
	\begin{align*}
	Z^\ast_i &=\min \set{{Z_i}: \ell(i)+Z_i>\ell(k)+Z_k \quad \forall k\neq i} 
	\end{align*}
	
	
	If $\Delta <0$, then for all $k\ne i$ we have
	$$\ell^\prime(i)+A+Z^\ast_i \ge \ell(i)+Z^\ast_i>\ell(k)+Z_k \ge \ell^\prime(k)+Z_k.$$	
	
	If $\Delta\ge 0$, then for all $k\ne i$ we have
	$$\ell^\prime(i)+Z^\ast_i \ge \ell(i)+Z^\ast_i>\ell(k)+Z_k \ge \ell^\prime(k)-A+Z_k.$$
	Hence, $Z_i^\prime \ge Z^\ast_i+ A$ ensures that the algorithm outputs $i$ on input ${X'}$, and the theorem follows from the following inequalities for any fixed $Z_{-i}$, with probabilities over the choice of $Z_i\sim \Lap(A/\eps)$. 
	\begin{eqnarray*}
		\Pr[\tilde{k}=i\mid{X'}, Z_{-i}] \ge \Pr[ Z_i^\prime \ge Z^\ast_i+ A\mid Z_{-i}] \ge e^{-\epsilon}\Pr[ Z_i \ge Z^\ast_i\mid Z_{-i}] = e^{-\epsilon}\Pr[\tilde{k}=i \mid { X}, Z_{-i}]
	\end{eqnarray*}
	\end{proof}

Next we provide accuracy guarantees of the standard (non-private) MLE   $\hat k$ and the output $\tilde k$ of our private algorithm \Offline\ when the data are drawn from $P_0,P_1$ with true change point $k^{*} \in (1, n)$. By providing both bounds, Theorem \ref{thm02}  quantifies the cost of requiring privacy in change point detection.

Our result for the standard (non-private) MLE is 
the first finite-sample accuracy guarantee for this estimator. 
Such non-asymptotic properties have not been previously studied in traditional statistics, which typically focuses on consistency and unbiasedness of the estimator, with less attention to the convergence rate.  We show that the additive error of the MLE  is constant with respect to the sample size, which means that the convergence rate is $O_P(1)$. That is, it converges in probability to the true change-point $k^*$ in constant time.

Note that accuracy depends on two measures $A$ and $C$ of the distances between distributions $P_0$ and $P_1$. Accuracy both of MLE $\hat k$ and \Offline\ output $\tilde k$ is best for distributions for which $A=\Delta(\ell)$ is small relative to KL-divergence, which is consistent with the intuition that larger changes are easier to detect but output sensitivity degrades the robustness of the estimator and requires more noise for privacy, harming accuracy.

A technical challenge that arises in proving accuracy of the private estimator is that the $x_i$ are not identically distributed when the true change-point $k^{*} \in (1,n]$, and so the partial log-likelihood ratios $\ell(k)$ are dependent across $k$.  Hence we need to investigate the impact of adding i.i.d. noise draws to a sequence of $\ell(k)$ that may be neither independent nor identically distributed.  Fortunately, the differences $\ell(k) - \ell(k+1) = \log\frac{P_1(x_k)}{P_0(x_k)}$ are piecewise i.i.d. This property is key in our proof. 
Moreover, we show that we can divide the possible outputs of the algorithm into regions that of doubling size with exponentially decreasing probability of being selected by the algorithm, resulting in accuracy bounds that are independent of the number of data points $n$.


\begin{restatable}{theorem}{offaccA} \label{thm02}
	For hypotheses $P_0,P_1$ such that $\Delta(\ell)<\infty$ and $n$ data points $X$ drawn from $P_0,P_1$ with true change time $k^*\in(1,n]$, the MLE $\hat k$ is $(\alpha,\beta)$-accurate for any $\beta>0$ and 
	\begin{equation}  \label{eqnthm02eq2}
	\alpha=\frac{2A^2}{C^2}\log \frac{32}{3\beta}.
	\end{equation}
	
	For hypotheses and data drawn this way with privacy parameter $\eps>0$, \Offline$(X,P_0,P_1,\eps,0,n)$ is $(\alpha,\beta)$-accurate for any $\beta>0$ and 
	\begin{equation} \label{eqnthm02eq1}
	\alpha=\max\left\{\frac{8A^2}{C^2}\log \frac{64}{3\beta},\, \frac{4A}{C\epsilon} \log \frac{16}{\beta}\right\}.
	\end{equation}

	In both expressions, $A=\Delta(\ell)$ and $C=\min\set{D_{KL}(P_1||P_0), D_{KL}(P_0||P_1)}$.
\end{restatable}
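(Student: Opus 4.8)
The plan is to reduce both accuracy claims to tail bounds on partial sums of the per-sample log-likelihood increments and to exploit the piecewise-i.i.d.\ structure these increments inherit under $H_1$. Write $W_i = \log\frac{P_1(x_i)}{P_0(x_i)}$, so that $\ell(k) = \sum_{i=k}^n W_i$ and $\ell(k) - \ell(k+1) = W_k$. Under $H_1$ the $W_i$ for $i < k^*$ are i.i.d.\ with mean $-D_{KL}(P_0||P_1)$ and the $W_i$ for $i \ge k^*$ are i.i.d.\ with mean $D_{KL}(P_1||P_0)$, so $\E{\ell(k)}$ is maximized at $k = k^*$ and decreases with slope at least $C$ as $k$ moves away from $k^*$ in either direction. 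Concretely, letting $U_i = W_i - \E{W_i}$ (mean zero, supported on an interval of length $A = \Delta(\ell)$, which is exactly the hypothesis of Corollary~\ref{cor.ott}), for $k > k^*$ one has $\ell(k^*) - \ell(k) = \sum_{i=k^*}^{k-1} U_i + (k-k^*)D_{KL}(P_1||P_0)$, and symmetrically for $k < k^*$ the gap involves only pre-change $U_i$. Because each one-sided gap involves only post-change (resp.\ pre-change) increments, the relevant partial sums are genuine sums of i.i.d.\ mean-zero bounded variables even though the full sequence is not identically distributed; this is what lets us apply the maximal inequalities above after splitting the analysis into the two cases $k > k^*$ and $k < k^*$.

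For the MLE, the event $|\hat k - k^*| \ge \alpha$ implies that some $k$ with $|k - k^*| \ge \alpha$ satisfies $\ell(k) \ge \ell(k^*)$, i.e.\ (on the $k > k^*$ side) that the centered walk $S_m := \sum_{i=k^*}^{k^*+m-1} U_i$ satisfies $|S_m| \ge mC$ for some $m \ge \alpha$. To obtain a bound independent of $n$ I would peel the range of $m$ into dyadic blocks $B_j = \{m : 2^j\alpha \le m < 2^{j+1}\alpha\}$. On $B_j$ the required deviation is at least $2^j\alpha C$ while the number of summands is at most $2^{j+1}\alpha$, so Corollary~\ref{cor.ott} (Ottaviani composed with Hoeffding, with $L = A$) bounds $\Pr[\max_{m \in B_j}|S_m| \ge 2^j\alpha C]$ by roughly $2\exp(-2^{j}\alpha C^2/(cA^2))$ for an absolute constant $c$, after splitting $2^j\alpha C = \lambda_1 + \lambda_2$. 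The point is that the drift grows like $2^j$ while the number of summands also grows like $2^j$, so the Hoeffding exponent grows like $2^j$; summing the resulting super-geometric series over $j \ge 0$ and over the two sides is dominated by the $j = 0$ term, yielding a total failure probability independent of $n$ that one sets to $\beta$ to solve for $\alpha = \frac{2A^2}{C^2}\log\frac{32}{3\beta}$.

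For \Offline\ the only change is that $\tilde k = k$ requires $\ell(k) + Z_k > \ell(k^*) + Z_{k^*}$, so on the $k>k^*$ side failure forces $-S_m + (Z_{k^*+m} - Z_{k^*}) > mC$; the drift $mC$ must now dominate both the statistical fluctuation $-S_m$ and the Laplace gap. I would bound these two contributions separately on each dyadic block: the $S_m$ part by Corollary~\ref{cor.ott} exactly as above (giving the first term in the $\max$), and the noise part by a union bound over the at most $2^{j+1}\alpha$ indices in $B_j$ of the $\Lap(A/\eps)$ tail, which costs only an additive $O((A/\eps)\log(2^{j+1}\alpha))$ that is again dominated by the $2^j\alpha C$ drift. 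Splitting the $\beta$-budget between the statistical and noise failure events and re-summing the dyadic series produces the stated $\alpha = \max\{\frac{8A^2}{C^2}\log\frac{64}{3\beta},\,\frac{4A}{C\eps}\log\frac{16}{\beta}\}$, where the two terms correspond to the two failure modes.

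The main obstacle is obtaining the $n$-independence: a naive union bound over all $k \in [n]$ would introduce a $\log n$ factor, and the crux is the dyadic peeling argument together with the maximal inequality, which must be calibrated so that the per-block probabilities decay fast enough (super-geometrically in $j$) for the series to converge to a bound controlled by its first term regardless of how many blocks the database admits. Secondary care is needed to verify the i.i.d.\ splitting is legitimate (so that Corollary~\ref{cor.ott} applies on each side), to track the constants through the $\lambda_1,\lambda_2$ split and the budget allocation, and, in the private case, to ensure the Laplace maxima over growing blocks never overtake the linear drift.
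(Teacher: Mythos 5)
Your proposal is correct and follows essentially the same route as the paper's proof: the same dyadic peeling of $\{k : |k-k^*|\ge\alpha\}$ into blocks of doubling size, the same piecewise-i.i.d.\ decomposition of $\ell(k)-\ell(k^*)$ into centered increments $U_j$ handled via Corollary~\ref{cor.ott}, and the same split of the drift budget between the statistical fluctuation and the Laplace noise (with a per-block union bound on the noise tails) in the private case.
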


\begin{proof} 

Our goal is to find some expression for $\alpha$ such that we can bound the probability of the bad event that \Offline\ outputs $\tilde k$ such that $\abs{\tilde k-k^*}> \alpha$ with probability at most $\beta$, where $k^*$ is the true change point. The first half of our analysis will yield another bound giving accuracy of the MLE $\hat k$.

Our proof is structured around the following observation. The algorithm only outputs a particular incorrect $\tilde k\ne k^*$ if there exists some $k$ in with $\ell(k)+Z_k > \ell(k^*)+Z_{k^*}$ for a set of random noise values $\set{Z_k}_{k\in[n]}$ selected by the algorithm. For the algorithm to output an incorrect value, there must either be a $k$ that nearly beats the true change point on the noiseless data or there must be a $k$ that receives much more noise than $k^*$. Intuitively, this captures the respective scenarios that unusual data causes non-private ERM to perform poorly and that unusual noise draws causes our private algorithm to perform poorly. 

Given some true change-point $k^*$ and error tolerance $\alpha>0$, we can partition the set of bad possible outputs $k$ into sub-intervals of exponentially increasing size as follows. For $i\ge 1$, let
\begin{align*}
R_i^- &= [k^*-2^i\alpha, k^*-2^{i-1}\alpha) \\
R_i^+ &=  (k^*+2^{i-1}\alpha,k^*+2^i\alpha] \\
R_i &= R_i^-\cup R_i^+
\end{align*}

Then for any range-specific thresholds $t_i$ for $i\ge 1$, our previous observations allow us to bound the probability of the bad event as follows: 
\begin{align}\label{eq.bad}
\Pr[\abs{\tilde k - k^*}>\alpha] &\le \sum_{i\ge 1}\Pr[\max_{k\in R_i} \set{\ell(k)-\ell(k^*)}>-t_i] + \sum_{i\ge 1}\Pr[\max_{k\in R_i}\set{Z_k-Z_{k^*}}\ge t_i]
\end{align}

We bound each term in the above expression separately for $t_i=2^{i-2}\alpha C$. For accuracy of the non-private MLE, we will set $\alpha$ to ensure that the first term is at most $\beta$. For accuracy of the private algorithm, we will set $\alpha$ to ensure that each term is at most $\beta/2$. 
The first and more difficult task requires us to reason about the probability that the log-likelihood ratios for the data are not too far away from their expectation. Although the $\ell(k)$ are not independent, their pairwise differences $\ell(k+1)-\ell(k)$ are, so we can apply our corollary of Ottaviani's inequality to bound the probability that $\ell(k)$ significantly exceeds $\ell(k^*)$ by appropriately defining several random variables corresponding to a data stream $X$ drawn according to the change-point model. 

Specifically, we can decompose the empirical log-likelihood difference between the true change-point $k^*$ and any candidate $k$ into the sum of i.i.d. random variables with mean zero and the expected value of this difference as follows:

\begin{align*}
U_j &= \begin{cases}
- \log \frac{P_0(x_j)}{P_1(x_j)}+D_{KL}(P_0||P_1), & j<k^* \\
- \log \frac{P_1(x_j)}{P_0(x_j)}+D_{KL}(P_1||P_0), & j\ge k^* 
\end{cases}
\\
\ell(k)-\ell(k^*) &= \begin{cases}
\sum_{j=k}^{k^*-1}U_j - (k^*-k)D_{KL}(P_0||P_1), & k< k^* \\
\sum_{j=k^*}^{k-1}U_j - (k-k^*)D_{KL}(P_1||P_0), & k\ge k^* 
\end{cases}
\end{align*}

We also define random variable $S_m$ to denote the sum of $m$ i.i.d. random variables as follows, noting that $S_m$ is distributed like $\sum_{j=k^*+m}^{k^*-1} U_j$ for $m<0$ and like $\sum_{j=k^*}^{k^*+m-1} U_j$ for $m>0$.
\begin{align*}
S_m &= \begin{cases} \sum_{k^*+m\le j<k^*} U_j, &m<0 \\ 
\sum_{k^*\le j<k^*+m} U_j & m> 0 \end{cases}
\end{align*}

With these random variables, we bound each term in the first set of terms in~(\ref{eq.bad}) for any $i\ge 1$ and threshold $t_i=2^{i-2}\alpha C$ as follows:
\begin{align}
\Pr[\max_{k\in R_i} &\set{\ell(k)-\ell(k^*)}>-2^{i-2}\alpha C] \notag \\
&\le \Pr[\max_{k\in R_i^-}\set{\sum_{j=k}^{k^*-1}U_j-(k^*-k)D_{KL}(P_0||P_1)}>-2^{i-2}\alpha C] \notag \\
&\quad +\Pr[\max_{k\in R_i^+}\set{\sum_{j=k^*}^{k-1}U_j-(k-k^*)D_{KL}(P_1||P_0)}>-2^{i-2}\alpha C] \notag \\
&\le \Pr[\max_{k\in[2^{i-1}\alpha]} \abs{S_{-k}} > 2^{i-2}\alpha C]  + \Pr[\max_{k\in[2^{i-1}\alpha]} \abs{S_{k}} > 2^{i-2}\alpha C] \notag \\
&\le \frac{4\cdot \exp(-2^{i-4}\alpha C^2/A^2)}{1-2\cdot \exp(-2^{i-4}\alpha C^2/A^2)}  \label{eq.bound1} \\
&\le 8 \exp(-2^{i-4} \alpha C^2/A^2) \label{eq.bound1b} \\
&= 8 \left(\exp(\frac{-\alpha C^2}{8A^2})\right)^{2^{i-1}} \notag
\end{align}

where \eqref{eq.bound1}~follows from an application of Corollary~\ref{cor.ott} with $\lambda_1=\lambda_2=2^{i-3}\alpha C$ and $L=A$, and the denominator can be simplified as in \eqref{eq.bound1b} under the assumption that $\alpha \geq \frac{8 A^2 \log 4}{C^2}$ to simplify the denominator, which is satisfied by our final bounds. 

We now consider the sum of these terms over all $i$, which will be needed for the final bound on Equation \eqref{eq.bad}.  We note that this sum is bounded above by a geometric series with ratio $\exp(-\alpha C^2/(8 A^2))$ since $2^{i-1}\ge i$, yielding the second inequality. 
Then the same assumed lower bound on $\alpha$ is used to simplify the denominator as in \eqref{eq.bound1b}:


\begin{align}
\sum_{i\geq 1} \Pr[ \max_{k \in R_i} \{ \ell(k) - \ell(k^*) \} > -2^{i-2}\alpha C] & \leq 8 \sum_{i\geq 1} \left(\exp(\frac{-\alpha C^2}{8A^2})\right)^{2^{i-1}} \notag \\
& \leq 8 \sum_{i\geq 1} \left(\exp(\frac{-\alpha C^2}{8A^2})\right)^{i} \notag \\
& \leq \frac{8\exp(\frac{-\alpha C^2}{8A^2})}{1-\exp(\frac{-\alpha C^2}{8A^2})} \notag \\
& \leq \frac{32}{3} \exp\left(\frac{-\alpha C^2}{8A^2}\right) \label{eq.bound3}
\end{align}


The first term in \eqref{eqnthm02eq1}  in the theorem statement ensures that the expression above is bounded by $\beta/2$, as is required for the private algorithm. 

For non-private MLE, we bound each term in the first set of terms in~(\ref{eq.bad}) for any $i\ge 1$ and threshold $t_i=0$ as follows:
\begin{align}
\Pr[\max_{k\in R_i} &\set{\ell(k)-\ell(k^*)}>0] \notag \\
&\le \Pr[\max_{k\in R_i^-}\set{\sum_{j=k}^{k^*-1}U_j-(k^*-k)D_{KL}(P_0||P_1)}>0] \notag \\
&\quad +\Pr[\max_{k\in R_i^+}\set{\sum_{j=k^*}^{k-1}U_j-(k-k^*)D_{KL}(P_1||P_0)}>0] \notag \\
&\le \Pr[\max_{k\in[2^{i-1}\alpha]} \abs{S_{-k}} > 2^{i-1}\alpha C]  + \Pr[\max_{k\in[2^{i-1}\alpha]} \abs{S_{k}} > 2^{i-1}\alpha C] \notag \\
&\le \frac{4\cdot \exp(-2^{i-2}\alpha C^2/A^2)}{1-2\cdot \exp(-2^{i-2}\alpha C^2/A^2)}  \label{eq.bound1} \\
&\le 8 \exp(-2^{i-2} \alpha C^2/A^2) \label{eq.bound1b} \\
&= 8 \left(\exp(\frac{-\alpha C^2}{2A^2})\right)^{2^{i-1}} \notag
\end{align}
Summing these terms over all $i$, 
\begin{align}
\sum_{i\geq 1} \Pr[ \max_{k \in R_i} \{ \ell(k) - \ell(k^*) \} > 0] & \leq 8 \sum_{i\geq 1} \left(\exp(\frac{-\alpha C^2}{2A^2})\right)^{2^{i-1}} \notag \\
& \leq 8 \sum_{i\geq 1} \left(\exp(\frac{-\alpha C^2}{2A^2})\right)^{i} \notag \\
& \leq \frac{8\exp(\frac{-\alpha C^2}{2A^2})}{1-\exp(\frac{-\alpha C^2}{2A^2})} \notag \\
& \leq \frac{32}{3} \exp\left(\frac{-\alpha C^2}{2A^2}\right). \label{eq.bound3}
\end{align}
For $\alpha$ as in \eqref{eqnthm02eq2} in the theorem statement, the expression above is bounded by $\beta$, completing the accuracy proof for the non-private MLE.


Next we bound the second set of terms in~(\ref{eq.bad}), controlling the probability that large noise draws cause large inaccuracies for the private algorithm. Since each $Z_k$ and $Z_{k^*}$ are independent draws from a Laplace distribution with parameter $A/\eps$, this bound follows from a union bound over all indices in $R_i$ and the definition of the Laplace distribution:  
	\begin{align*}
	\Pr[\max_{k\in R_i} \set{Z_k-Z_{k^*}}\ge 2^{i-2}\alpha C] &\le \Pr[2\max_{k\in R_i} |Z_k|\ge 2^{i-2}\alpha C] \\
	&\le 2^{i}\alpha \Pr[ |\Lap(A/\eps)| \ge 2^{i-3} \alpha C]  \\
	& \le 2^{i}\alpha\cdot \exp(-2^{i-3}\alpha C \eps / A) \\
	& = 2^{i}\alpha \left( \exp(\frac{-\alpha C \eps}{4A})\right)^{2^{i-1}} 
	\end{align*}

Then by summing over all ranges and assuming in \eqref{eq.Zdenom} that $\alpha\ge \frac{4A\ln 2}{C\eps}$ to simplify the denominator, we obtain a bound on the probability of large noise applied to any possible $k$ far from $k^*$.


\begin{align}
\sum_{i\geq 1} \Pr[ \max_{k \in R_i} \{ Z_k - Z_{k^*} \} > 2^{i-2}\alpha C] & \le \alpha \sum_{i\ge 1} 2^{i} (\exp(-\alpha C\eps/(4A)))^{2^{i-1}} \notag \\
&\le \alpha 2\sum_{i\ge 1} i (\exp(-\alpha C\eps/(4A)))^i \notag \\
&= \alpha 2\frac{\exp(-\alpha C\eps/(4A))}{(1-\exp(-\alpha C\eps/(4A)))^2} \notag \\
&\le 8\alpha \exp(-\alpha C\eps/(4A)) \label{eq.Zdenom}
\end{align}
Since $x/2\ge \ln x$, requiring $\alpha\ge \frac{4A\log (16/\beta)}{C\eps}$ suffices to ensure that \eqref{eq.Zdenom} is at most $\beta/2$ as required. By Inequality~\ref{eq.bad}, this guarantees that $\Pr[\abs{\tilde k - k^*}>\alpha]\le \beta$ for the assumed ranges of $\alpha$ captured in Equation~\eqref{eqnthm02eq1} in the theorem statement, completing the proof.

\end{proof}



\subsection{Relaxing  uniform bound  assumptions}\label{sec:off3}

In this subsection, we prove accuracy and a limited notion of privacy for \Offline\ when $\delta>0$ and $P_0,P_1$ are such that $A_\delta$ is finite. Since we are no longer able to uniformly bound $\log P_1(x)/P_0(x)$, these accuracy results include worse constants than those in Section \ref{sec:off2}, but the relaxed assumption about $P_0,P_1$ makes the results applicable to a wider range of distributions, including Gaussian distributions (see Example \ref{ex.gauss}). Note of course that for some pairs of very different distributions, such as distributions with non-overlapping supports, the assumption that $A_\delta<\infty$ may still fail. A true change point $k^*$ can always be detected with perfect accuracy given $x_{k^*-1}$ and $x_{k^*}$, so we should not expect to be able to offer any meaningful privacy guarantees for such distributions.  

By similar rationale, relaxing the uniform bound assumption means that we may have a single data point $x_j$ that dramatically increases $\ell(k)$ for $k\ge j$, so we cannot add noise proportional to $\Delta(\ell)$ and privacy no longer follows from that of \ReportMax. Instead we offer a weaker notion of privacy in Theorem~\ref{thm.relaxedpriv} below. As with the usual definition of differential privacy, we guarantee that the output of our algorithm is similarly distributed on neighboring databases, only our notion of neighboring databases depends on the hypothesized distributions. Specifically, the a single entry in $X$ drawn from either $P_0$ or $P_1$ may be replaced without detection by another entry drawn from either $P_0$ or $P_1$, even if the rest of the database is arbitrary.

\begin{theorem}\label{thm.relaxedpriv} For any $\eps,\delta>0$, any hypotheses $P_0,P_1$ such that $A_\delta<\infty$, any index $j\in[n]$, any $i,i'\in\set{0,1}$, and any $x_1,\dots,x_{j-1},x_{j+2},\dots,x_n$, let $X_i=\set{x_1,\dots,x_n}$ denote the random variable with $x_j\sim P_i$ and let $X'_{i'}=\set{x_1,\dots, x_{j-1},x'_j,x_{j+1},\dots,x_n}$ denote the random variable with $x'_j\sim P_{i'}$. 
Then for any $S\subseteq [n]$, we have  
\begin{align*}
\Pr[\Offline(X_i,P_0,P_1,\eps,\delta,n)\in S] &\le \exp(\eps)\cdot \Pr[\Offline(X'_{i'},P_0,P_1,\eps,\delta,n)\in S] +\delta, 
  \end{align*}
where the probabilities are over the randomness of the algorithm and of $X_i,X'_{i'}$.
\end{theorem}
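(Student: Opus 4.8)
The plan is to reduce the claim to the per-entry sensitivity computation already carried out in the proof of Theorem~\ref{thm.assumption1priv}, and then to average that pointwise guarantee over the random draws of $x_j$ and $x'_j$. The key observation is that replacing the $j$th entry changes the queries only through the scalar shift $\Delta=\log\frac{P_1(x'_j)}{P_0(x'_j)}-\log\frac{P_1(x_j)}{P_0(x_j)}$, which is added to $\ell(k)$ for every $k\le j$, exactly as in \eqref{eqnThm1prof2}. That proof never uses the uniform bound $\Delta(\ell)$ beyond the inequality $\abs{\Delta}\le A$; hence for any fixed values $x_j,x'_j$ with $\abs{\Delta}\le A=A_\delta$ and any output set $S$, the identical coupling of the Laplace noise gives the pointwise bound $q_y(S)\le e^{\eps}q_{y'}(S)$, where $q_y(S):=\Pr[\Offline\in S\mid x_j=y]$ is the output probability taken over the algorithm's noise with the $j$th entry fixed to $y$.

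First I would introduce the good set $G=\set{y:\abs{\log(P_1(y)/P_0(y))}\le A/2}$. The triangle inequality gives $\abs{\Delta}\le A$ whenever $x_j,x'_j\in G$, so the pointwise bound holds on $G\times G$ and in particular $M:=\sup_{y\in G}q_y(S)\le e^{\eps}\inf_{y\in G}q_y(S)=:e^\eps m$. By the definition of $A_\delta$ in \eqref{eqnconsA2}, each of $P_0,P_1$ places probability less than $\delta/2$ outside $G$. Writing the two sides of the claim as $\int q_y(S)\,dP_i(y)$ and $\int q_y(S)\,dP_{i'}(y)$, I would first peel off the event $\set{x_j\notin G}$, which contributes at most $\delta/2$ since $q_y\le 1$, reducing the problem to comparing the good-region integral $\int_G q_y(S)\,dP_i(y)$ against $e^{\eps}\int_G q_y(S)\,dP_{i'}(y)$.

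The heart of the argument is this measure-change step, which I expect to be the main obstacle: since $x_j$ and $x'_j$ are \emph{independent} draws from the possibly distinct distributions $P_i,P_{i'}$, a naive $\sup/\inf$ comparison leaves a spurious factor $1/(1-\delta/2)$ that is incompatible with the clean additive budget $\delta$. The way around this is a layer-cake identity together with the factor bound $m\ge e^{-\eps}M$. Using $q_y\le 1$, I would write \[\textstyle\int_G q_y(S)\,dP_i(y)=\int_0^1 P_i(\set{y\in G:q_y(S)>t})\,dt\le M\,P_i(G),\] because the super-level set is contained in $G$ and is empty for $t\ge M$; dually $\int_G q_y(S)\,dP_{i'}(y)\ge m\,P_{i'}(G)$, because for $t<m$ the super-level set equals all of $G$. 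Multiplying the latter by $e^{\eps}$ and using $e^{\eps}m\ge M$ gives $e^{\eps}\int_G q_y(S)\,dP_{i'}(y)\ge M\,P_{i'}(G)$.

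Subtracting, the good-region gap is at most $M\,(P_i(G)-P_{i'}(G))\le M\cdot\delta/2\le\delta/2$, where I use $P_i(G)\le 1$, $P_{i'}(G)\ge 1-\delta/2$, and $M\le 1$. Combining this with the earlier $\delta/2$ from the atypical draw $x_j\notin G$ yields $\int q_y(S)\,dP_i(y)\le e^{\eps}\int q_y(S)\,dP_{i'}(y)+\delta$, which is exactly the asserted inequality. The only loose end is the degenerate case $m=0$, but then $M\le e^{\eps}m=0$ forces $q\equiv0$ on $G$ and the good-region integral vanishes, so the bound holds trivially.
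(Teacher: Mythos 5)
Your proof is correct. The core reduction is the same as the paper's: both arguments invoke the bounded-sensitivity analysis of Theorem~\ref{thm.assumption1priv} pointwise on the event that the relevant log-likelihood ratios are at most $A_\delta/2$ in magnitude (so that $\abs{\Delta}\le A_\delta$), and charge the complementary event to the additive $\delta$. Where you diverge is in the averaging step you correctly flag as the crux, namely handling the fact that $x_j\sim P_i$ and $x'_j\sim P_{i'}$ are independent draws from possibly different distributions. The paper dispatches this in one line: it conditions on the joint good event $E_\delta=\set{\abs{\log\frac{P_1(x_j)}{P_0(x_j)}-\log\frac{P_1(x'_j)}{P_0(x'_j)}}<A_\delta}$, applies the pointwise bound under that conditioning, and then uses $\Pr[\tilde k'\in S\mid E_\delta]\Pr[E_\delta]=\Pr[\set{\tilde k'\in S}\cap E_\delta]\le\Pr[\tilde k'\in S]$, which silently absorbs exactly the normalization factor $1/\Pr[E_\delta]$ you were worried about; the entire $\delta$ is then paid once as $\Pr[E_\delta^c]<\delta$. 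Your route instead compares $M=\sup_{y\in G}q_y(S)$ and $m=\inf_{y\in G}q_y(S)$ on the per-coordinate good set $G$ and pays $\delta/2$ twice, once for $x_j\notin G$ and once for the mass discrepancy $P_i(G)-P_{i'}(G)\le\delta/2$. Both are valid; yours is more explicit about the measure change between $P_i$ and $P_{i'}$ (though the layer-cake identity is doing no more work than the trivial bounds $\int_G q\,dP\le M\,P(G)$ and $\int_G q\,dP\ge m\,P(G)$), while the paper's conditioning identity is shorter and avoids introducing $M$ and $m$ altogether.
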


\begin{proof} 
	
Define the event that the log-likelihood ratios of $x_j,x_j'$ as in the theorem statement are bounded by $A_\delta$ as follows: 
	$$E_{\delta}:=\left\{\left|\log \frac{P_1(x_j)}{P_0(x_j)}-\log \frac{P_1(x'_j)}{P_0(x'_j)}\right|<A_\delta \right\}. $$

	Let $\tilde{k}=\Offline(X_i,P_0,P_1,\eps,\delta,n),\tilde{k}'=\Offline(X'_{i'},P_0,P_1,\eps,\delta,n)$. Then by Theorem~\ref{thm.assumption1priv} and the observation that $\Pr[E_\delta^c]<\delta$ by definition of $A_\delta$, we have that for any $S\subseteq[n]$,  
	\begin{eqnarray*}
	\Pr[\tilde{k}\in S]&\leq& \Pr[\tilde{k}\in S|E_\delta]\Pr[E_\delta]+\Pr[E_\delta^c]\\
	&\leq& \exp(\epsilon)\Pr[\tilde{k}'\in S|E_\delta]\Pr[E_\delta]+\delta\\
	&\leq &\exp(\epsilon)\Pr[\tilde{k}'\in S]+\delta.
	\end{eqnarray*}
%
\end{proof}

Allowing $\Delta(\ell)$ to be infinite precludes our use of Hoeffding's inequality as in Theorem~\ref{thm02}. The main idea in the proof, however, can be salvaged by decomposing the change into a change from $P_0$ to the average distribution $(P_0+P_1)/2$ and then the average distribution to $P_1$. Correspondingly, we will use $C_M$, an alternate distance measure between $P_0$ and $P_1$, defined below next to $C$ from the previous section for comparison:
\begin{align} 
C&=\min\left\{D_{KL}(P_0||P_1), D_{KL}(P_1||P_0)\right\} \label{eq.C} \\
C_M&=\min\left\{D_{KL}(P_0||\frac{P_0+P_1}{2}), D_{KL}(P_1|| \frac{P_0+P_1}{2})\right\} =\min_{i=0,1} \mathbb{E}_{x\sim P_i}\left[\log \frac{2P_i(x)}{P_0(x)+P_1(x)}\right] \label{eqnthm02eq3b}
\end{align}
Because $(2 P_i)/(P_0+P_1)\leq 2$, we have $0 \leq D_{KL}(P_i||(P_0+P_1)/2)\leq \log 2,$ and thus the constant $C_M$ in (\ref{eqnthm02eq3b}) is well-defined.



\begin{restatable}{theorem}{offaccB}\label{thm.assumption1acc}	
For $\delta>0$ and hypotheses $P_0,P_1$ such that $A_\delta<\infty$ and $n$ data points $X$ drawn from $P_0,P_1$ with true change time $k^*\in(1,n)$, the MLE $\hat k$ is $(\alpha,\beta)$-accurate for any $\beta >0$ and 
	\begin{equation}  \label{eqnthm02eq4}
	\alpha=\frac{67}{C_M^2}\log \frac{64}{3\beta}.
	\end{equation}

For hypotheses and data drawn this way with privacy parameter $\eps>0$, \Offline$(X,P_0,P_1,\eps,\delta,n)$ is $(\alpha,\beta)$-accurate for any $\beta>0$ and 
	\begin{equation} \label{eqnthm02eq3}
\alpha=\max\{\frac{262}{C_M^2}\log \frac{128}{3\beta}, \frac{2A\log (16/\beta)}{C_M\eps}\}.
	\end{equation}
	In both expressions, $A=A_\delta$ and $C_M=\min\left\{D_{KL}(P_0||\frac{P_0+P_1}{2}), D_{KL}(P_1|| \frac{P_0+P_1}{2})\right\}$.
\end{restatable}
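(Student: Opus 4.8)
The plan is to reuse the entire skeleton of the proof of Theorem~\ref{thm02}, changing only the tool used to control the data term. As before, I would fix the true change-point $k^*$ and tolerance $\alpha$, partition the bad outputs into the exponentially growing regions $R_i = R_i^- \cup R_i^+$, and bound $\Pr[|\tilde k - k^*| > \alpha]$ by Inequality~\eqref{eq.bad} as a data term $\sum_i \Pr[\max_{k\in R_i}\{\ell(k)-\ell(k^*)\} > -t_i]$ plus a noise term $\sum_i \Pr[\max_{k\in R_i}\{Z_k - Z_{k^*}\} \ge t_i]$, now with threshold $t_i = 2^{i-2}\alpha C_M$. The noise term is handled verbatim as in Theorem~\ref{thm02}: the only change is that the Laplace scale is $A_\delta/\eps$, so the same union bound, Laplace tail estimate, and geometric-series summation produce the $\frac{2A\log(16/\beta)}{C_M\eps}$ branch with $A = A_\delta$. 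The non-private MLE bound \eqref{eqnthm02eq4} again comes from the data term alone with $t_i = 0$.

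The substantive change is the data term, where $\Delta(\ell)$ may be infinite so Hoeffding (Corollary~\ref{cor.ott}) no longer applies. The key device is to pass through the midpoint distribution $M = (P_0+P_1)/2$ and exploit the pointwise AM--GM inequality $P_0(x)P_1(x) \le M(x)^2$, which gives $\log\frac{P_1(x)}{P_0(x)} \ge 2\log\frac{P_1(x)}{M(x)}$ and, symmetrically, $\log\frac{P_0(x)}{P_1(x)} \ge 2\log\frac{P_0(x)}{M(x)}$. Consequently, on $R_i^+$ (where the data are drawn from $P_1$) the bad event $\ell(k) - \ell(k^*) > -t_i$ implies $\sum_{j=k^*}^{k-1}\log\frac{P_1(x_j)}{M(x_j)} < t_i/2$, and symmetrically on $R_i^-$ (data from $P_0$) it implies $\sum_j \log\frac{P_0(x_j)}{M(x_j)} < t_i/2$. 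I would therefore define the mean-zero variables $Y_j = D_{KL}(P_i\|M) - \log\frac{P_i(x_j)}{M(x_j)}$, with $i$ the index relevant to each side, and apply the Bernstein-type Corollary~\ref{cor.bern} (which already packages Bernstein with Ottaviani) to control $\max_{k\in R_i}$ of their partial sums, using that the region drift $\mathbb{E}[\sum \log\frac{P_i}{M}] = (k-k^*)D_{KL}(P_i\|M) \ge 2^{i-1}\alpha C_M$ dominates $t_i/2$.

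The crux of the argument is verifying the exponential-moment hypothesis of Corollary~\ref{cor.bern} with an \emph{absolute} constant $v$, and this is exactly where the midpoint distribution pays off. Because $\frac{P_i}{M} \le 2$ pointwise, the upper tail of $\log\frac{P_i}{M}$ is bounded by $\log 2$, while the potentially heavy lower tail is tamed by the change-of-measure identity $\mathbb{E}_{x\sim P_i}[\frac{M(x)}{P_i(x)}] = \int M\,dx = 1$. Together these give $\mathbb{E}_{x\sim P_i}[e^{Y_j}] = e^{D_{KL}(P_i\|M)}\,\mathbb{E}_{x\sim P_i}[\frac{M}{P_i}] = e^{D_{KL}(P_i\|M)} \le 2$, so $\mathbb{E}[e^{|Y_j|}-1-|Y_j|]$ is bounded by a numerical constant independent of $P_0,P_1$. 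This is precisely why the concentration part of the bound depends only on $C_M$, through universal constants such as $67$ and $262$, rather than inheriting the $A^2/C^2$ scaling of Theorem~\ref{thm02}.

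I expect the main obstacle to be the constant bookkeeping rather than any conceptual difficulty. I must split the shared threshold $t_i$ correctly across the factor-of-two loss from the AM--GM step, confirm that the region drift dominates $t_i/2$ so that Corollary~\ref{cor.bern} is invoked with a genuinely positive deviation, and then sum the resulting geometric series (after imposing the simplifying lower bounds on $\alpha$ that clear the Bernstein and geometric denominators) to push the data term below $\beta$ in the MLE case and below $\beta/2$ in the private case. Tracking how the universal $v$ propagates through these steps to yield the stated constants is the most delicate part; the conceptual content is entirely captured by the AM--GM reduction to $\log(P_i/M)$ and the identity $\int M = 1$.
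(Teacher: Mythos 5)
Your proposal matches the paper's proof essentially step for step: the same region decomposition and data-plus-noise split inherited from Theorem~\ref{thm02}, the same reduction to the midpoint distribution $(P_0+P_1)/2$ (the paper encodes your AM--GM step as the factor-of-$\tfrac12$ inequality on $\ell(k)-\ell(k^*)$), the same mean-zero variables fed into Corollary~\ref{cor.bern}, and the same verification of the exponential-moment condition via $P_i/M\le 2$ for one tail and $\mathbb{E}_{x\sim P_i}[M/P_i]=1$ for the other, with the noise term recycled from Theorem~\ref{thm02} with $C$ replaced by $2C_M$. The approach is correct and identical in substance; only the constant bookkeeping remains, exactly as you anticipate.
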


\begin{proof}
		The general framework of this proof is similar to that of Theorem \ref{thm02}, but the main difference is that  Hoeffding's inequality is not applicable in this general setting, since we allow $\Delta(\ell)$ to be unbounded. The main idea in this proof is to consider the alternative log-likelihood ratio using the average distribution $(P_0+P_1)/2$, in which Bernstein inequality can be applied. 
		
		Following the notation from Theorem \ref{thm02}, given some true change-point $k^*$ and error tolerance $\alpha>0$, we can partition the set of bad possible outputs $k$ into sub-intervals of exponentially increasing size as follows. For $i\ge 1$, let
		\begin{align*}
		R_i^- &= [k^*-2^i\alpha, k^*-2^{i-1}\alpha) \\
		R_i^+ &=  (k^*+2^{i-1}\alpha,k^*+2^i\alpha] \\
		R_i &= R_i^-\cup R_i^+
		\end{align*}
		
		Then for any range-specific thresholds $t_i$ for $i\ge 1$, we will still bound the probability of the bad event as follows: 
		\begin{align}\label{eq.bad2}
		\Pr[\abs{\tilde k - k^*}>\alpha] &\le \sum_{i\ge 1}\Pr[\max_{k\in R_i} \set{\ell(k)-\ell(k^*)}>-t_i] + \sum_{i\ge 1}\Pr[\max_{k\in R_i}\set{Z_k-Z_{k^*}}\ge t_i]
		\end{align}
		
		We will re-define $U_j$ to denote the i.i.d random variables with mean zero by the alternative log-likelihood, and $S_m$ to denote the sum of $m$ i.i.d $U_j$  as follows:
		
		\begin{align*}
		U_j &= \begin{cases}
		- \log \frac{2P_0(x_j)}{(P_0+P_1)(x_j)}+D_{KL}(P_0||\frac{P_0+P_1}{2}), & j<k^* \\
		- \log \frac{2P_1(x_j)}{(P_0+P_1)(x_j)}+D_{KL}(P_1||\frac{P_0+P_1}{2}), & j\ge k^* 
		\end{cases}
		\\
		S_m &= \begin{cases} \sum_{k^*+m\le j<k^*} U_j, &m<0 \\ 
		\sum_{k^*\le j<k^*+m} U_j & m> 0 \end{cases}
		\end{align*}
		With these random variables, we can bound the empirical log-likelihood difference between the true change-point $k^\ast$ and any candidate $k$ by 
		\begin{align*}
		\frac{1}{2}[\ell(k)-\ell(k^*)] = \sum_{j=k}^{k^\ast}\log \frac{P_1(x_j)}{P_0(x_j)} &\leq \begin{cases}
		\sum_{j=k}^{k^*-1}U_j - (k^*-k)D_{KL}(P_0||\frac{P_0+P_1}{2}), & k< k^* \\
		\sum_{j=k^*}^{k-1}U_j - (k-k^*)D_{KL}(P_1||\frac{P_0+P_1}{2}), & k\ge k^* .
		\end{cases}
		\end{align*}
		
	    Then we bound each term in the first set of terms in~(\ref{eq.bad2}) for any $i\ge 1$ and threshold $t_i=2^{i-1}\alpha C_M$ as follows:
	    
	    \begin{align}
	    \Pr[\max_{k\in R_i} &\set{\ell(k)-\ell(k^*)}>-2^{i-1}\alpha C_M] \notag \\
	    &\le \Pr[\max_{k\in R_i^-}\set{\sum_{j=k}^{k^*-1}U_j-(k^*-k)D_{KL}(P_0||\frac{P_0+P_1}{2}))}>-2^{i-2}\alpha C_M] \notag \\
	    &\quad +\Pr[\max_{k\in R_i^+}\set{\sum_{j=k^*}^{k-1}U_j-(k-k^*)D_{KL}(P_1||\frac{P_0+P_1}{2})}>-2^{i-2}\alpha C_M] \notag \\
	    &\le \Pr[\max_{k\in[2^{i-1}\alpha]} \abs{S_{-k}} > 2^{i-2}\alpha C_M]  + \Pr[\max_{k\in[2^{i-1}\alpha]} \abs{S_{k}} > 2^{i-2}\alpha C_M] \\
	    &\le \frac{4\exp\left(-\frac{2^{i-4}\alpha C_M^2}{ C_M+ 32}\right)}{1-2\exp\left(-\frac{2^{i-4}\alpha C_M^2}{ C_M+ 32 }\right)} \label{eq.Thm7bound1} \\
	    &\le 8\exp\left(-\frac{2^{i-4}\alpha C_M^2}{ C_M+ 32}\right) \label{eq.Thm7bound2}
	    \end{align}
	    
	    where \eqref{eq.Thm7bound1} follows from an application of Corollary~\ref{cor.bern} with $\lambda_1=\lambda_2=2^{i-3}\alpha C_M$ and $v=4$. To apply Corollary~\ref{cor.bern}, we first need to check the conditions of Bernstein inequality. We shall show that for any $j$,
	    \begin{eqnarray}\label{norm2}
	    \E{\exp(|U_j|)-1-|U_j|} \le 2,
	    \end{eqnarray}
	    and then all conditions of Bernstein inequality are fulfilled.   
	    To prove this, let $Y_j$ be the i.i.d.~alternative log-likelihood ratio as follows:
	    \begin{align*}
	    Y_j &= \begin{cases}
	    - \log \frac{2P_0(x_j)}{(P_0+P_1)(x_j)}, & j<k^* \\
	    - \log \frac{2P_1(x_j)}{(P_0+P_1)(x_j)}, & j\ge k^* 
	    \end{cases}
	    \end{align*}
	    
	    Then it suffices to note that
	    \begin{align*}
	    	\E{\exp(|U_j|)}&=\E{\exp(|\log Y_j-\E{\log Y_j}|)}\\
	    	&\leq\E{\exp(\log Y_j-\E{\log Y_j})}
	    	+ \E{\exp(\E{\log Y_j}-\log Y_j)}\\
	    	&= \E{Y_j} e^{-C_{M}}+\frac{\E{1/Y_j}}{e^{-C_{M}}}\\
	    	&\leq e^{-C_M}+\frac{2}{e^{-C_M}},
	    \end{align*}
	    and the fact that $e^{-C_M}\in [1,2]$.
	    
	    It follows from direct calculations that the condition $\alpha\geq\frac{262}{C_M^2}\log \frac{128}{3\beta}>363/C_M^2$ implies $2\exp\left(-\frac{2^{i-4}\alpha C_M^2}{ C_M+ 32 }\right)<1/2$, which is used to simplify the denominator as in \eqref{eq.Thm7bound2}.
	     
	     We now consider the sum of these terms over all $i$, which will be needed for the final bound on Equation~(\ref{eq.bad2}).
	     
	     \begin{align}
	     \sum_{i\geq 1} \Pr[ \max_{k \in R_i} \{ \ell(k) - \ell(k^*) \} > -2^{i-2}\alpha C_M] & \leq \frac{16 \exp\left(-\frac{2^{-3}\alpha C_M^2}{ C_M+ 32}\right)}{1-\exp\left(-\frac{2^{-3}\alpha C_M^2}{ C_M+ 32}\right)} \notag \\
	     & \leq \frac{64}{3}\exp\left(-\frac{\alpha C_M^2}{262}\right). \label{eq.Thm7bound3}
	     \end{align}
	     
	     The first term in \eqref{eqnthm02eq3} in the theorem statement ensures that the expression above is bounded by $\beta/2$, as is required for the private algorithm. 
	     
	     For non-private MLE, we bound each term in the first set of terms in~(\ref{eq.bad2}) for any $i\ge 1$ and threshold $t_i=0$ as follows:
	     \begin{align}
	     \Pr[\max_{k\in R_i} &\set{\ell(k)-\ell(k^*)}>0] \notag \\
	     &\le \Pr[\max_{k\in R_i^-}\set{\sum_{j=k}^{k^*-1}U_j-(k^*-k)D_{KL}(P_0||\frac{P_0+P_1}{2})}>0] \notag \\
	     &\quad +\Pr[\max_{k\in R_i^+}\set{\sum_{j=k^*}^{k-1}U_j-(k-k^*)D_{KL}(P_1||\frac{P_0+P_1}{2})}>0] \notag \\
	     &\le \Pr[\max_{k\in[2^{i-1}\alpha]} \abs{S_{-k}} > 2^{i-1}\alpha C_M]  + \Pr[\max_{k\in[2^{i-1}\alpha]} \abs{S_{k}} > 2^{i-1}\alpha C_M] \\
	     &\le \frac{4\exp\left(-\frac{2^{i-3}\alpha C_M^2}{ C_M+ 8}\right)}{1-2\exp\left(-\frac{2^{i-3}\alpha C_M^2}{ C_M+ 8}\right)} \label{eq.Thm7bound4} \\
	     &\le 8\exp\left(-\frac{2^{i-3}\alpha C_M^2}{ C_M+ 8}\right) \label{eq.Thm7bound5}
	     \end{align}
	     
	      where \eqref{eq.Thm7bound4} follows from an application of Corollary~\ref{cor.bern} with $\lambda_1=\lambda_2=2^{i-2}\alpha C_M$ and $v=4$. It follows from direct calculations that the condition $\alpha\geq\frac{67}{C_M^2}\log \frac{128}{3\beta}>92/C_M^2$ implies $2\exp\left(-\frac{2^{i-3}\alpha C_M^2}{ C_M+ 8}\right)<1/2$, which is used to simplify the denominator as in \eqref{eq.Thm7bound5}. Then, we consider the sum of these terms over all $i$.
	      
	      \begin{align}
	      \sum_{i\geq 1} \Pr[ \max_{k \in R_i} \{ \ell(k) - \ell(k^*) \} > 0] & \leq \frac{16 \exp\left(-\frac{2^{-2}\alpha C_M^2}{ C_M+ 8}\right)}{1-\exp\left(-\frac{2^{-2}\alpha C_M^2}{ C_M+ 8}\right)} \notag \\
	      & \leq \frac{64}{3}\exp\left(-\frac{\alpha C_M^2}{67}\right). \label{eq.Thm7bound6}
	      \end{align}
	      
	      For $\alpha$ as in \eqref{eqnthm02eq4} in the theorem statement, the expression above is bounded by $\beta$, completing the accuracy proof for the non-private MLE.
	     
	     The calculations for the probability bounds for the Laplace noise terms are the same as those in Theorem \ref{thm02} with $C$ substituted by $2C_M$, which ends up with a probability no more than another $\beta/2$ under the condition $\alpha\ge \frac{2A\log (16/\beta)}{C_M\eps}$.

	     By Inequality~(\ref{eq.bad2}), this guarantees that $\Pr[\abs{\tilde k - k^*}>\alpha]\le \beta$ for the assumed ranges of $\alpha$ captured in Equation~\eqref{eqnthm02eq3} in the theorem statement, completing the proof.
\end{proof}


\section{Online private change-point detection}\label{s.online}

In this section, we give a new differentially private algorithm for change point detection in the online setting, \Online.  In this setting, the algorithm initially receives $n$ data points $x_1,\dots,x_n$ and then continues to receive data points one at a time. As before, the goal is to privately identify an approximation of the time $k^*$ when the data change from distribution $P_0$ to $P_1$. Additionally, we want to identify this change shortly after it occurs.

Our offline algorithm is not directly applicable because we do not know a priori how many points must arrive before a true change point occurs. To resolve this, \Online\ works like \AboveThresh, determining  after each new data entry arrives  whether it is likely that a change occurred in the most recent $n$ entries. When \Online\ detects a sufficiently large (noisy) partial log likelihood ratio $\ell(k)=\sum_{i=k}^j \log \frac{P_1(x_i)}{P_0(x_i)}$, it calls \Offline\ to privately determine the most likely change point $\tilde k$ in the window $\set{x_{j-n+1},\dots,x_j}$. 

Privacy of \Online\ is immediate from composition of \AboveThresh\ and \Offline, each with privacy loss $\eps/2$. As before, accuracy requires $X$ to be drawn from $P_0,P_1$ with some true change point $k^*$. This algorithm also requires a suitable choice of $T$ to guarantee that \Offline\ is called for a window of data that actually contains $k^*$. Specifically, $T$ should be large enough that the algorithm is unlikely to call \Offline\ when $j<k^*$ but small enough so that it is likely to call \Offline\ by time $j=k^*+n/2$. When both of these conditions hold, we inherit the accuracy of \Offline, with an extra $\log n$ factor arising from the fact that the data are no longer distributed exactly as in the change-point model after conditioning on calling \Offline\ in a correct window. 

With our final bounds, we note that $n\gg \frac{A}{C} \log(k^*/\beta)$ suffices for existence of a suitable threshold, and an analyst must have a reasonable approximation of $k^*$ in order to choose such a threshold. Otherwise, the accuracy bound itself has no dependence on the change-point $k^*$. 

{\centering
\begin{minipage}{\linewidth}
\begin{algorithm}[H]
\caption{Online private change-point detector : \Online($X, P_0, P_1,\epsilon, n, T$) 
}
\begin{algorithmic}
\State \textbf{Input:} database $X$, distributions $P_0,P_1$, privacy parameter $\eps$, starting size $n$, threshold $T$
\State Let $A=\max_x \log \frac{P_1(x)}{P_0(x)}-\min_{x'}\log \frac{P_1(x')}{P_0(x')}$
\State Let $\hat T = T+\Lap(4A/\eps)$ 
\For {each new data point $x_j, j\ge n$}
  \State Compute $\ell_j=\max_{j-n+1\le k\le j} \ell(k)$ 
  \State Sample $Z_j \sim \Lap(\frac{8A}{\epsilon})$ 
  \If {$\ell_j+Z_j>\hat T$}
    \State Output \Offline$(\set{x_{j-n+1},\dots,x_j},P_0,P_1,\eps/2,0,n)+(j-n)$ 
    \State Halt
  \Else
    \State Output $\bot$
  \EndIf
\EndFor
\end{algorithmic}\label{algo2}
\end{algorithm}
\end{minipage}
}

\begin{theorem}\label{thm.onlinepriv}
For arbitrary data $X$, \Online$(X,P_0,P_1,\eps,n,T)$ is $(\epsilon, 0)$-differentially private.
\end{theorem}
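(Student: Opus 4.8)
The plan is to derive privacy entirely from composition, since \Online\ is by construction the sequential execution of two already-private mechanisms, each allotted half of the privacy budget. First I would pin down the sensitivity of the queries fed to the \AboveThresh-style loop. Each partial log-likelihood ratio $\ell(k)$ has sensitivity $A=\Delta(\ell)$ exactly as in \eqref{eqnconsA}, and since the maximum of functions that each have sensitivity at most $A$ again has sensitivity at most $A$ (for neighbors $X,X'$, $\lvert\max_k \ell(k,X)-\max_k \ell(k,X')\rvert \le \max_k \lvert \ell(k,X)-\ell(k,X')\rvert \le A$), the query $\ell_j=\max_{j-n+1\le k\le j}\ell(k)$ has sensitivity $A$ as well.

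Next I would observe that the loop of \Online, up to and including the step at which it halts, is precisely an instantiation of \AboveThresh\ with sensitivity parameter $\Delta=A$ and privacy parameter $\eps/2$: the noisy threshold $\hat T=T+\Lap(4A/\eps)=T+\Lap(2A/(\eps/2))$ and the per-query noise $Z_j\sim\Lap(8A/\eps)=\Lap(4A/(\eps/2))$ match the noise scales prescribed by \AboveThresh\ for budget $\eps/2$. Hence the transcript consisting of the string of $\bot$ outputs together with the (random) halting time $j$ is $(\eps/2,0)$-differentially private for arbitrary data by the \AboveThresh\ privacy theorem \cite{DNRRV09,dwork2014algorithmic}. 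Then, conditioned on halting at time $j$, the remaining output is \Offline$(\set{x_{j-n+1},\dots,x_j},P_0,P_1,\eps/2,0,n)$ shifted by the deterministic offset $(j-n)$. By Theorem~\ref{thm.assumption1priv}, \Offline\ with privacy parameter $\eps/2$ is $(\eps/2,0)$-differentially private for arbitrary input data, and adding the fixed offset $(j-n)$ is post-processing of its output (the offset depends only on the already-released halting time $j$), so it does not affect privacy.

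Finally I would assemble the pieces via adaptive (sequential) composition of pure differential privacy. I would view the full output of \Online\ as the pair (\AboveThresh\ transcript, \Offline\ output), where the second mechanism's input window $\set{x_{j-n+1},\dots,x_j}$ is selected as a function of the first mechanism's output $j$. Both mechanisms are $(\eps/2,0)$-DP with respect to neighboring streams, so their composition is $(\eps/2+\eps/2,0)=(\eps,0)$-DP. The one point that needs care---and the only real obstacle---is handling the conditional and data-dependent invocation of \Offline: because the window on which \Offline\ runs is chosen adaptively from the halting time, I would invoke the adaptive composition theorem for $(\eps,0)$-DP rather than plain parallel composition, and I would stress that both components' guarantees hold for worst-case data, so no distributional assumption on $X$ is needed, consistent with the ``arbitrary data'' hypothesis of the theorem.
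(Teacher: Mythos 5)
Your proof is correct and follows exactly the route the paper takes, which is stated there in a single sentence: privacy of \Online\ is immediate from composition of \AboveThresh\ and \Offline, each run with budget $\eps/2$. Your additional details---the sensitivity bound $\lvert\max_k \ell(k,X)-\max_k \ell(k,X')\rvert \le A$ for the windowed max queries, the verification that the noise scales $\Lap(4A/\eps)$ and $\Lap(8A/\eps)$ match the \AboveThresh\ prescription at budget $\eps/2$, and the treatment of the data-dependent window via adaptive composition and post-processing---are all sound elaborations of that same argument.
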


\begin{restatable}{theorem}{onlineacc}\label{thm.onlineacc}
For hypotheses $P_0,P_1$ such that $\Delta(\ell)<\infty$, a stream of data points $X$ with starting size $n$ drawn from $P_0,P_1$ with true change time $k^*\ge n/2$, privacy parameter $\eps>0$, and threshold $T\in[T_L,T_U]$ with
	\begin{eqnarray*}
	T_L&:=&2A\sqrt{2\log\frac{64k^*}{\beta}} - C+\frac{16A}{\eps}\log\frac{8k^*}{\beta},\\
	T_U&:=&\frac{nC}{2} - \frac{A}{2}\sqrt{n\log (8/\beta)}-\frac{16A}{\eps}\log\frac{8k^*}{\beta},
	\end{eqnarray*}	
we have that \Online$(X,P_0,P_1,\eps,n,T)$ is $(\alpha,\beta)$ accurate for any $\beta>0$ and 
	
	$$\alpha=\max\left\{\frac{16A^2}{C^2} \log\frac{32n}{\beta}, \frac{4A}{C\eps} \log\frac{8n}{\beta} \right\}.$$
	
	In the above expressions, $A=\Delta(\ell)$ and $C=\min\set{D_{KL}(P_0||P_1),D_{KL}(P_1||P_0)}$.
	
\end{restatable}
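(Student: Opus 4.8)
Since privacy is already established in Theorem~\ref{thm.onlinepriv}, the task is purely one of accuracy, and the plan is to reduce it to the accuracy of the offline subroutine. The central observation is that if \Online\ halts at data point $j^*$, its output is $\Offline(\set{x_{j^*-n+1},\dots,x_{j^*}})+(j^*-n)$, so the global error equals exactly the \emph{local} error of \Offline\ on the window $W_{j^*}=\set{x_{j^*-n+1},\dots,x_{j^*}}$ measured against the local change time $k^*-(j^*-n)$. Hence it suffices to show two things: (i) with high probability the halting time $j^*$ lands in the good range $[k^*,k^*+n/2]$, so that $W_{j^*}$ contains $k^*$ at a local position $k^*-(j^*-n)\in[n/2,n]\subseteq(1,n]$ where \Offline's hypotheses apply (the assumption $k^*\ge n/2$ guarantees such windows start at a valid index); and (ii) \Offline\ is accurate on the realized window.

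For (i), I would not invoke Theorem~\ref{thm.atacc} as a black box but instead directly control the two families of Laplace variables used by the \AboveThresh\ layer. First fix a good-noise event on which the threshold perturbation $\Lap(4A/\eps)$ and the per-query perturbations $\Lap(8A/\eps)$ for all $j\le k^*+n/2$ are simultaneously bounded by $\Theta(\frac{A}{\eps}\log(k^*/\beta))$; a union bound over the $O(k^*)$ relevant steps together with the Laplace tail gives this with failure probability $O(\beta)$, and this is the source of the $\frac{16A}{\eps}\log\frac{8k^*}{\beta}$ terms in $T_L$ and $T_U$. Conditioned on this event, a forced halt by $j=k^*+n/2$ is the easy direction: since $\ell_{k^*+n/2}\ge \ell(k^*)=\sum_{i=k^*}^{k^*+n/2}\log\frac{P_1(x_i)}{P_0(x_i)}$ is a sum of $n/2+1$ i.i.d.\ post-change terms of mean at least $C$, Hoeffding's inequality yields $\ell(k^*)\ge \frac{nC}{2}-\frac{A}{2}\sqrt{n\log(8/\beta)}$ with high probability, which after subtracting the noise slack forces a halt whenever $T\le T_U$.

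The harder direction, and what I expect to be the main technical obstacle, is the no-early-halt bound: I must show the pre-change scan statistic $\max_{n\le j<k^*}\ell_j=\max\set{\sum_{i=k}^j\log\frac{P_1(x_i)}{P_0(x_i)} : k\le j<k^*,\ j-k<n}$ stays below $T_L$ minus the noise slack. Every term here has negative drift $-D_{KL}(P_0||P_1)\le -C$, so centering each log-ratio produces a mean-zero walk with increments in an interval of length $A$, and I would bound the maximum of this walk using Corollary~\ref{cor.ott} (Ottaviani combined with Hoeffding), union-bounding over the $O(k^*)$ candidate endpoints $j$; the deviation contributes the $2A\sqrt{2\log(64k^*/\beta)}$ term while the drift contributes the $-C$ correction, reproducing $T_L$. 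The delicacy is that the $\ell_j$ are strongly dependent across both the endpoint $j$ and the window start $k$, so the maximal inequality must be applied to the centered partial sums rather than to the $\ell_j$ directly. Together with the forced-halt bound this confines $j^*$ to $[k^*,k^*+n/2]$ with failure probability $O(\beta)$.

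For (ii), the natural worry flagged earlier is that conditioning on halting at $j^*$ distorts the distribution of $W_{j^*}$, so \Offline's accuracy theorem no longer applies verbatim. I would sidestep this entirely with an \emph{unconditional} union bound: on the good halting event the output error is contained in $\bigcup_{j\in[k^*,k^*+n/2]}\set{\Offline(W_j)\text{ is locally inaccurate}}$, and each $W_j$ in this range, viewed on its own, is distributed exactly according to the change-point model with local change time $k^*-(j-n)\in[n/2,n]$. Coupling each hypothetical call to an independent noise draw (only one of which is realized) lets me apply Theorem~\ref{thm02} with privacy budget $\eps/2$ and per-window failure probability $\Theta(\beta/n)$, and summing over the $\Theta(n)$ windows gives total failure $O(\beta)$. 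This union over $\Theta(n)$ windows is precisely what inflates the offline guarantee by a $\log n$ factor, producing the stated $\alpha$ up to constants. Collecting the $O(\beta)$ contributions from the noise event, the two concentration events, and the offline union bound, and rescaling constants, yields $(\alpha,\beta)$-accuracy.
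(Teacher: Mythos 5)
Your proposal is correct and follows essentially the same route as the paper's proof: the same split into a no-early-halt bound via Corollary~\ref{cor.ott} on centered pre-change partial sums (giving $T_L$), a forced-halt bound via Hoeffding on $\ell(k^*)$ (giving $T_U$), the \AboveThresh-style noise slack $\frac{16A}{\eps}\log\frac{8k^*}{\beta}$, and crucially the same unconditional union bound over the $\Theta(n)$ candidate windows containing $k^*$ (the paper's $\sum_{k\in G}\Pr[F(k)]$ decomposition) to evade the conditioning issue and incur the $\log n$ factor. The only difference is presentational — you unroll the \AboveThresh\ accuracy guarantee into explicit Laplace tail bounds where the paper cites Theorem~\ref{thm.atacc} as a black box.
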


%

\begin{proof} 
We first give a range $[T_L,T_U]$ of thresholds that ensure that except with probability $\beta/4$, the randomly sampled data stream satisfies the following two conditions: 
\begin{enumerate}
\item For $T\ge T_L$, $\max_{k\in[j-n+1,j]}\ell(k)< T-\alpha'$ for every $j<k^*$.
\item For $T\le T_U$, $\max_{k\in[k^*-n/2,k^*+n/2)}\ell(k)>T+\alpha'$.
\end{enumerate}
When these conditions are satisfied, the \AboveThresh\ guarantee ensures that except with probability $\beta/4$, the randomness of the online algorithm ensures that it calls the offline algorithm on a window of data containing the true change-point. Then we will argue that our overall accuracy follows from the offline guarantee, where we will allow failure probability $\beta/2$.

We will get the first condition by taking a union bound over all windows tested before the change point of the probability that the maximum log-likelihood $\max_k \ell(k)$ for $n$ elements $X=(x_1,\dots,x_n)$ sampled from $P_0$ exceed a given threshold. To bound this probability, we first define the following random variables.
\begin{align*}
U_j &= -\log \frac{P_0(x_j)}{P_1(x_j)}+D_{KL}(P_0||P_1) \qquad\qquad
S_m = \sum_{1\le j\le m} U_j
\end{align*}
We note that each $\ell(k)$ is the sum of i.i.d. random variables, and that the maximum log-likelihood over $m$ consecutive elements is equal in distribution to $\max_{k\in[m]}S_k-kD_{KL}(P_0||P_1)$.  This yields the first inequality below. Inequality \eqref{eq.test} comes from applying Corollary \ref{cor.ott} with $\lambda_1=\lambda_2=2^{i-2}C+t/2$ and interval length $L=A$.
	\begin{align}
	\Pr\left[ \max_{1\le k \le n} \ell(k) > t\right] 
	\le & \sum_{i\ge 1}\Pr[\max_{k\in [2^{i-1},2^i)}\set{S_k-kD_{KL}(P_0||P_1)}>t ] \nonumber \\
	\le & \sum_{i\ge 1}\Pr[\max_{k\in [2^{i-1}]} S_k > 2^{i-1}C+t] \nonumber \\
	\le & \sum_{i\ge 1} \frac{2\exp(-(2^{i-2}C+t/2)^2/(2^{i-2}A^2))}{1-2\exp(-(2^{i-2}C+t/2)^2/(2^{i-2}A^2))}  \label{eq.test} \\
	\le & 4 \sum_{i\ge 1} \exp(-(2^{i-2}C+t/2)^2/(2^{i-2}A^2)) \label{eq.bla1} \\
	\le & 8 \exp(-(2^{-1}C+t/2)^2/(2^{-1}A^2)) \label{eq.bla2}
	\end{align}
	
Inequalities \eqref{eq.bla1} and \eqref{eq.bla2} follow by plugging in $t = 2A\sqrt{2\log\frac{64k^*}{\beta}} - C$.  This ensures that $1-2\exp(-(2^{i-2}C+t/2)^2/(2^{i-2}A^2)) \geq 1/2$, giving Inequality \eqref{eq.bla1}, and that the series is increasing exponentially in $i$, so we can collapse the sum with another factor of 2 by considering only $i=1$ as in Inequality \eqref{eq.bla2}.  This value of $t$ also ensures that the bound of Inequality \eqref{eq.bla2} is at most $\beta/(8k^*)$.
Taking the union bound over all the windows prior to the change-point, this shows that Condition 1 holds for $T_L=2A\sqrt{2\log\frac{64k^*}{\beta}} - C+\alpha'$ except with probability $\beta/8$.

To show that the second condition holds except with additional probability $\beta/8$, we consider the window of data with the first half of data drawn from $P_0$ and the second half drawn from $P_1$ and bound the probability that $\ell(k^*)$ in this window is less than a given threshold as follows. We note that $\ell(k^*)$ is the sum of $n/2$ i.i.d. random variables, so we define mean-zero random variables $V_j = -\log \frac{P_1(x_j)}{P_0(x_j)} +D_{KL}(P_1||P_0)$ and bound their sum using Hoeffding's inequality:

\begin{align}
\Pr[\max_{k^*-n/2\le k< k^*+n/2} \ell(k)<t] &\le \Pr[\ell(k^*)<t] \notag \\
&\le \Pr[\sum_{k\in[n/2]}V_j>nC/2-t] \notag \\
&\le \exp({-4(nC/2-t)^2}/(nA^2)) \label{eq.boundA}
\end{align}
Plugging in $t = \frac{nC}{2}-\frac{A}{2}\sqrt{n\log(8/\beta)}$ in this final expression ensures that \eqref{eq.boundA} $\leq \beta/8$.  This ensures that Condition 2 is satisfied except with probability $\beta/8$ for $T_U=nC/2-A\sqrt{2\log(8/\beta)}-\alpha'$. 


Then we can instantiate the \AboveThresh\ accuracy guarantee with privacy parameter $\eps/2$ and accuracy parameter $\beta/4$ to ensure that for $\alpha' = \frac{16A\log (8k^*/\beta)}{\eps}$ when Conditions 1 and 2 are satisfied, \AboveThresh\ will identify a window containing the true change-point except with probability $\beta/4$.  Combining this with the $\beta/4$ probability that Conditions 1 and 2 fail to hold when $T \in [T_L, T_U]$, we get that \Online\ calls \Offline\ in a window containing the change-point except with probability $\beta/2$ over the randomness of the data and of the online portion of the algorithm.

We next instantiate \Offline~with appropriate parameters to ensure that conditioned on being called in the correct window, it will output a $\tilde{k}$ that is within $\alpha$ of the true change-point $k^*$ with probability at most $\beta/2$.  We can then complete the proof by taking a union bound over all the failure probabilities.

Our offline accuracy guarantee requires data points sampled i.i.d. from $P_0$ before the change point and from $P_1$ thereafter, so it remains to show that conditioning on the event that we call the offline algorithm in a correct window does not harm the accuracy guarantee too much. 
For a window size $n$, change-point $k^*$, stream $X$ of at least $k^*+n/2$ data points, set of random coins required by \Online\ and its call to \Offline, and a stopping index $k>n/2$, let $N(k)$ denote the event that \Online\ calls \Offline\ on a window centered at $k$, and let $F(k)$ denote the event that \Offline\ on the window centered at $k$ fails to output an approximation within $\alpha$ of $k^*$. 

Our previous argument bounds the probability of all $N(k)$ for $k$ outside of a good range $G=(k^*-n/2,k^*]$, and our offline guarantee bounds the probability of $F(k)$ for any $k\in G$ as long as the data are drawn according to the change-point model. Then the overall probability of a bad event can be bounded as follows, where the probability is over the $X$ drawn from $P_0$ and $P_1$ with change-point $k^*$ and of the randomness of the algorithm:
\begin{align*}
\Pr[\abs{\tilde k-k^*}>\alpha] &= \sum_{k>n/2} \Pr[N(k)\cap F(k)] \\
&\le \sum_{k\not\in G}\Pr[N(k)] + \sum_{k\in G} \Pr[F(k)] 
\end{align*}
The first summation is at most $\beta/2$ by our previous arguments. By instantiation of Theorem~\ref{thm02} for \Offline\ with a $\beta/(2n)$ and $\eps/2$, the second summation is also bounded by $\beta/2$ when $\alpha= \max\{\frac{32A^2}{C^2} \log\frac{64n}{\beta}, \frac{8A}{C\eps} \log\frac{16n}{\beta} \}$.
\end{proof}



\section{Numerical studies}\label{s.sim}

We now report the results of Monte Carlo experiments designed to validate the theoretical results of previous sections.  We only consider our accuracy guarantees because the nature of differential privacy provides a strong worst-case guarantee for all hypothetical databases, and therefore is impractical and redundant to test empirically. Our simulations consider both offline and online settings for two canonical problems: detecting a change in the mean of Bernoulli and Gaussian distributions.


We begin with the offline setting to verify performance of our \Offline\ algorithm.  We use $n=200$ observations where the true change occurs at time $k^{*} = 100$. This process is repeated $10^4$ times. For both the Bernoulli and Gaussian models, we consider the following three different change scenarios, corresponding to the size of the change and parameter selection for \Offline. For each of these cases, we consider privacy parameter $\eps=0.1,0.5,1,\infty$, where $\eps=\infty$ corresponds to the non-private problem, which serves as our baseline. The results are summarized in \Cref{fig:offline}, which plots  the empirical probabilities $\beta = \Pr[|\tilde{k}-k^\ast|>\alpha]$ as a function of $\alpha$.

\vspace{-.5em}\begin{itemize}\setlength\itemsep{0em}
\item[(A)] \textbf{Large change.} Bernoulli model: detecting a change from $p_0=0.2$ to $p_1=0.8$. Gaussian model: detecting a change from $\mu_0=0$ to $\mu_1=1$.
\item[(B)] \textbf{Small change.} Bernoulli model: detecting a change from $p_0=0.2$ to $p_1=0.4$. Gaussian model: detecting a change from $\mu_0=0$ to $\mu_1=0.5$.
\item[(C)] \textbf{Misspecified change} Bernoulli model: algorithm tests for change from $p_0=0.2$ to $p_1=0.4$ when true distributions have $p_0=0.2$ and $p_1=0.8$. Gaussian model: algorithm tests for change from $\mu_0=0$ to $\mu_1=0.5$ when true distributions have $\mu_0=0$ and $\mu_1=1$.
\end{itemize}

\vspace{-.5em}

\Cref{fig:offline} highlights three positive results for our algorithm when data is drawn from Bernoulli or Gaussian distributions: accuracy is best when the true change in data is large (plots a and d) compared to small (plots b and e), accuracy deteriorates as $\eps$ decreases for stronger privacy, and the algorithm performs well even when the true change is larger than that hypothesized (plots c and f). This figure  emphasizes that our algorithm performs well even for quite strong privacy guarantees ($\eps<1$). The misspecified change experiments bolster our theoretical results substantially, indicating that our hypotheses can be quite far from the distributions of the true data and our algorithms will still identify a change-point accurately.

\begin{center}
\begin{minipage}{.9\linewidth}
\begin{figure}[H]
	\centering
	\subfloat[][Bernoulli, large change]{\includegraphics[width=.33\textwidth]{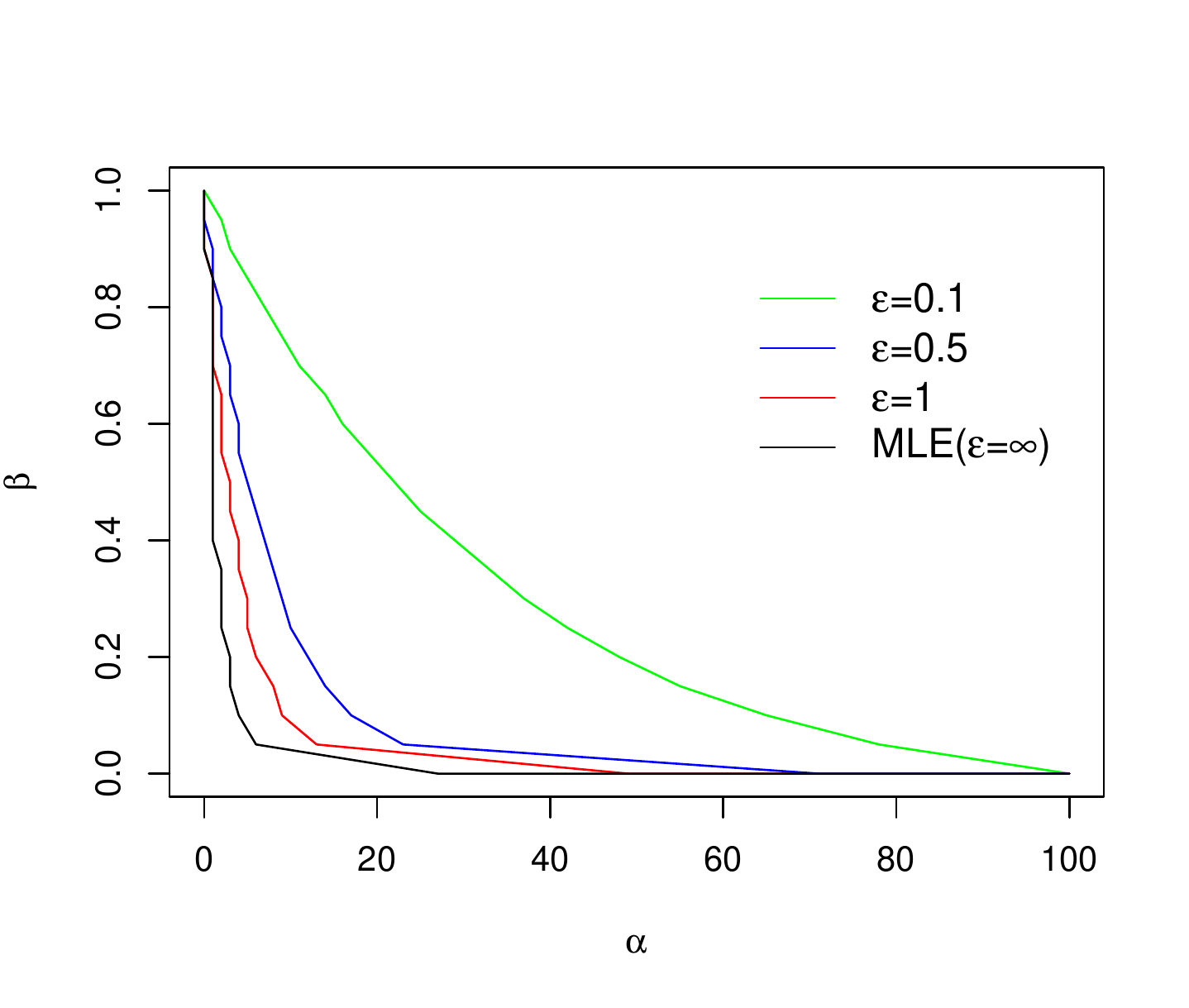}}
	\subfloat[][Bernoulli, small change]{\includegraphics[width=.33\textwidth]{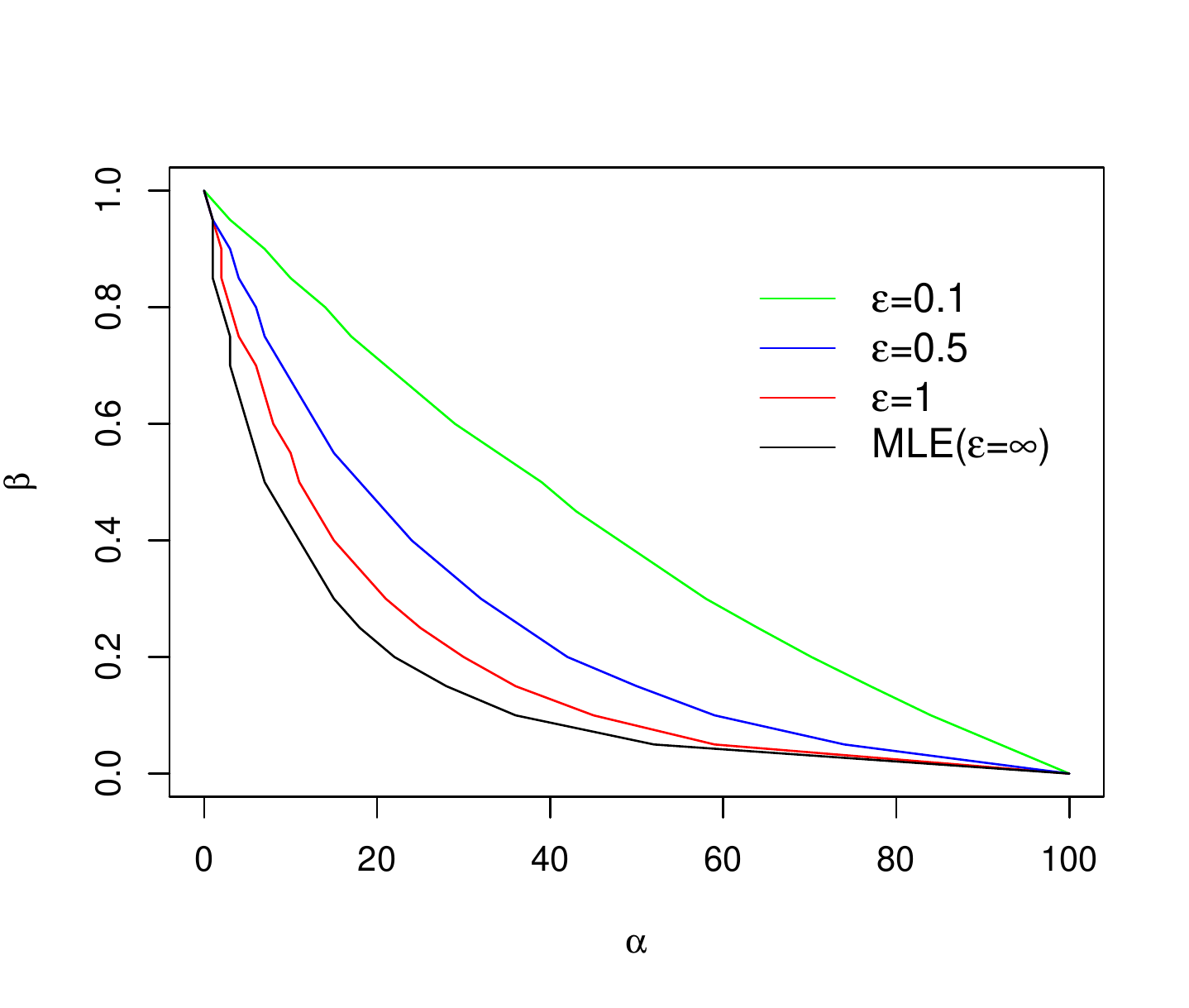}}
	\subfloat[][Bernoulli, misspecified change]{\includegraphics[width=.33\textwidth]{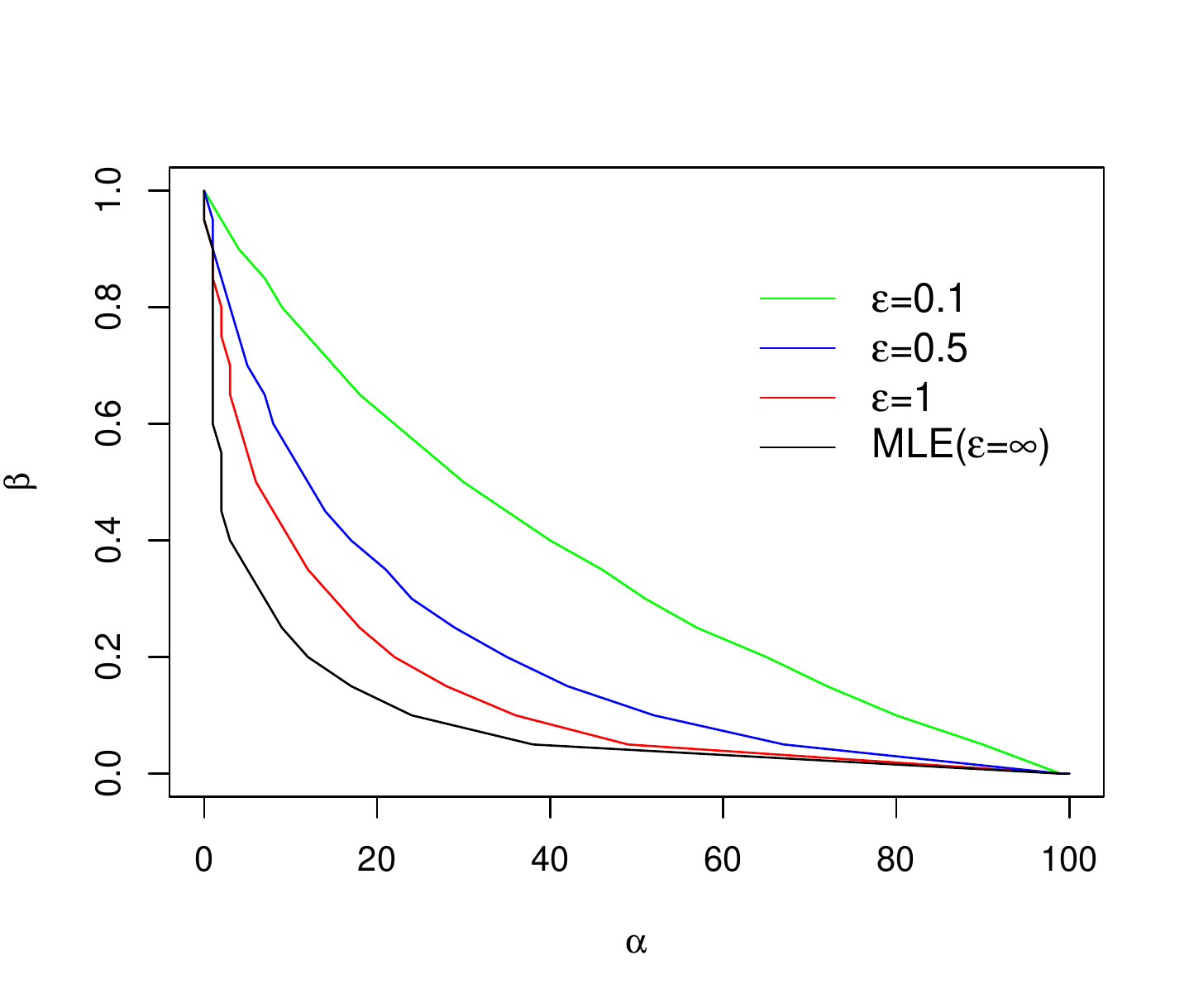}}\\
	\subfloat[][Gaussian, large change]{\includegraphics[width=.33\textwidth]{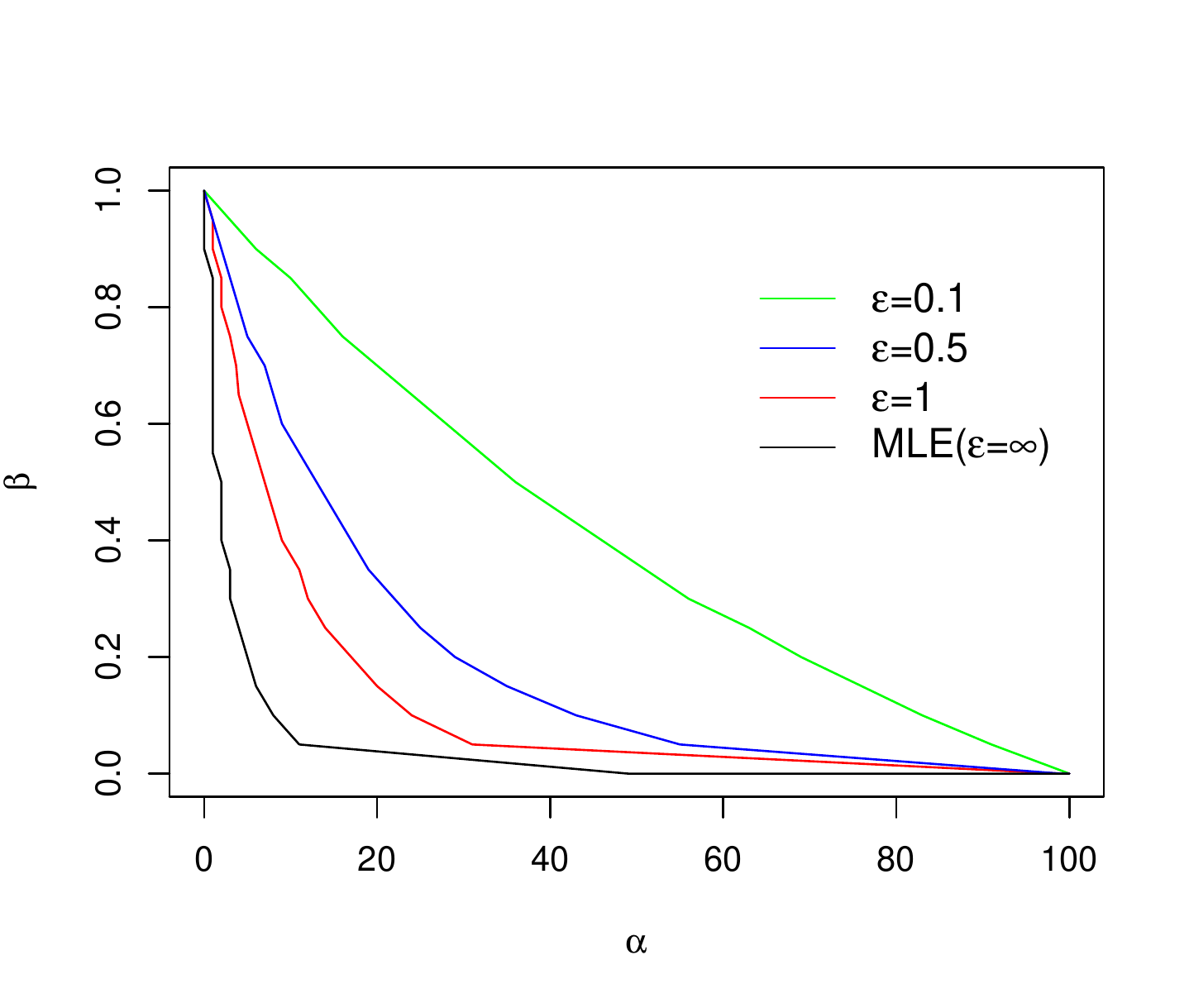}}
	\subfloat[][Gaussian, small change]{\includegraphics[width=.33\textwidth]{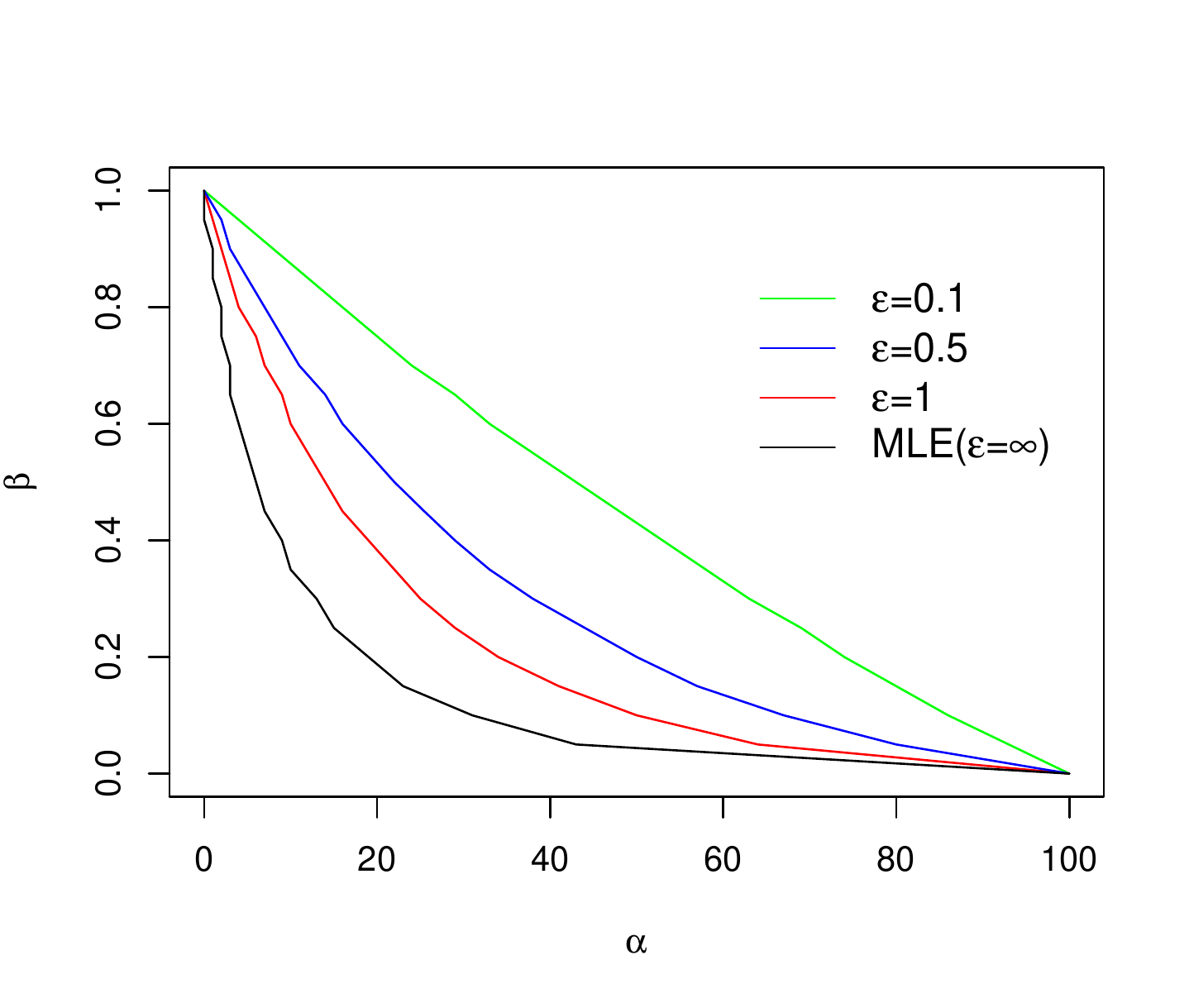}}
	\subfloat[][Gaussian, misspecified change]{\includegraphics[width=.33\textwidth]{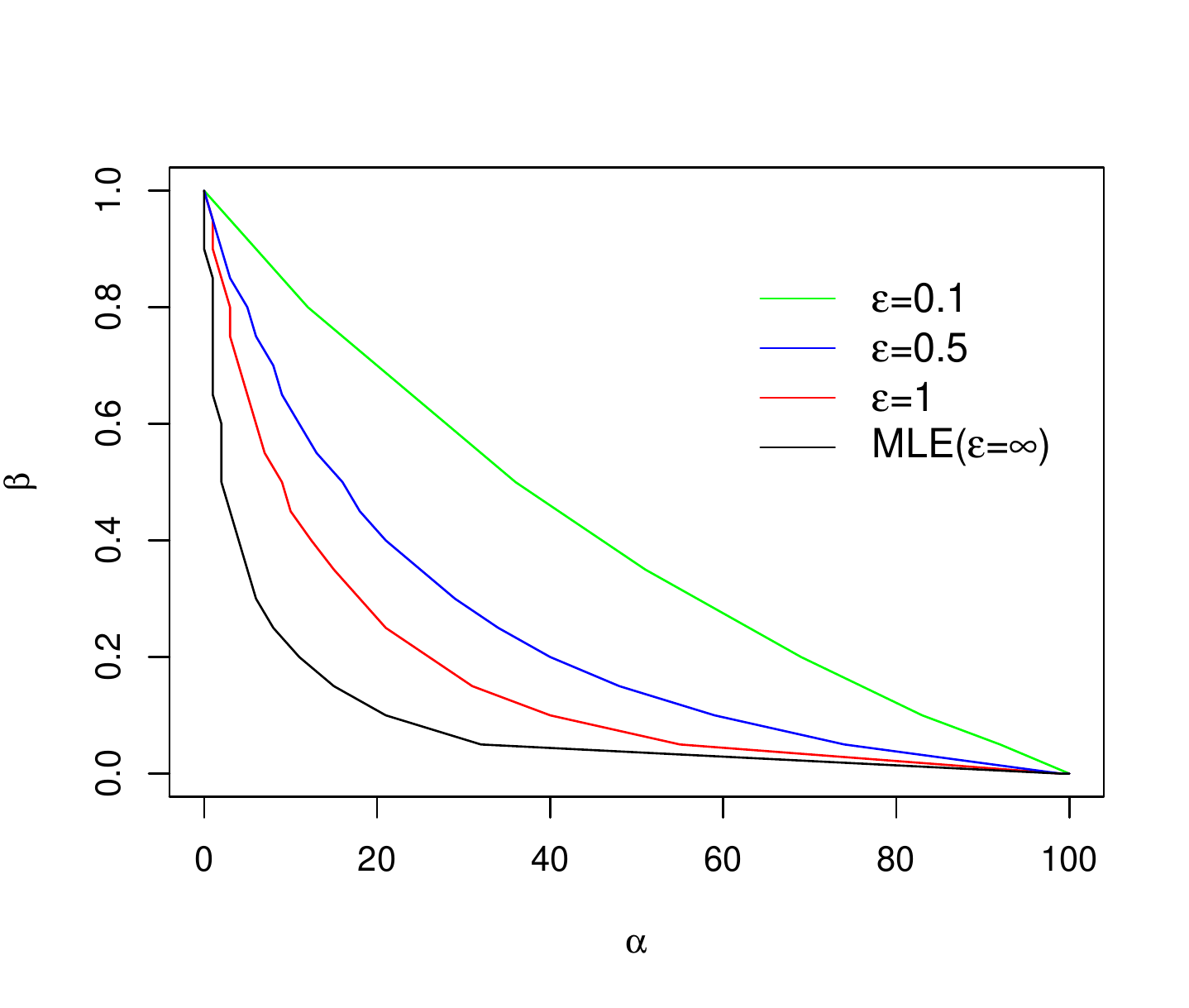}}
	\caption{\small Accuracy for large change, small change, and misspecified change Monte Carlo simulations with Bernoulli and Gaussian data. Each simulation involves $10^4$ runs of \Offline\ with varying $\eps$ on data generated by 200 i.i.d. samples from appropriate distributions with change point $k^*=100$. 
	} \label{fig:offline}
\end{figure}
\end{minipage}
\end{center}

We also run Monte Carlo simulations of our online change-point detection algorithm \Online, when the data points arrive sequentially and the true change occurs at time $k^{*} = 5000$. We choose the appropriate threshold $T$ by setting a constraint that an algorithm must have positive and negative false alarm rates both at most $0.1$. The range of threshold $T$ for the online algorithm needs to be non-empty, which impacts our choice of sliding window size $n$.  Unfortunately the window size of $n=200$ used in the offline simulations is not sufficient for our online examples.  A larger window size is needed to detect smaller changes or under higher levels of noise.  For this reason, we choose window size $n=700$ and restrict our online simulations to the large change scenario (A) and privacy parameters $\epsilon=0.5,1,\infty$.



For the online simulations, we use several key ideas in Section \ref{s.online} to speed up the numerical search of the threshold $T$. On the one hand, the threshold $T$ cannot be too small, otherwise a false alarm will be likely. To control the false alarm rate of $0.10$ with up to $k^{*} = 5000$ sliding windows, a conservative lower bound of the threshold $T$ is the $1-0.10/5000= 0.99998$ quantile of the noisy versions of $W_{n}$ with $n=700$ under the {\it pre-change} distribution. On the other hand, the threshold $T$ cannot be too large, otherwise it will fail to detect a true change in any sliding windows of size $n=700$. A useful upper bound of the threshold $T$ is the $10\%$ quantile of the noisy versions of CUSUM statistics $W_{n} = \max_{1 \le k \le n} \ell_{k}$ with $n=700$ when the change occurs at time $350$, since it will guarantee that the online algorithms raise an alarm with probability at least $0.9$ during the time interval $[4650,5350]$.

Next, we simulate  $10^{6}$ realizations of the CUSUM statistics $W_{n} = \max_{1 \le k \le n} \ell_{k}$ with $n=700$ in both the pre-change and post-change cases. In each case, we speed up the computation of $W_{i}$ by using the recursive form $W_{i} = \max{W_{i-1}, 0} + \log(P_{1}(X_{i})/P_{0}(X_{i}))$ for $i\ge 1$. The empirical quantiles of the noisy versions of $W_{n}$ with $n=700$ under the pre- and post- change cases will yield the lower and upper bounds of the threshold $T$. When the range of the threshold $T$ is non-empty, we choose one that is closest to the upper bound. For the Bernoulli model, we use $T=220$ for all values of $\epsilon=0.5, 1, \infty$.  In the Gaussian model, our window size $n=700$ is not sufficient to ensure non-empty range of $T$ under false alarm rate $0.2$ for $\epsilon=0.5,1$, so we relax the false alarm constraints for these $\epsilon$ values and choose $T=180,150,100$ for $\epsilon=0.5,1,\infty$, respectively.  Figure \ref{fig:online1} (c) indeed shows that the false alarm rates are high in the Gaussian model with $\epsilon=0.5,1$.


Figure \ref{fig:online1} summarizes our online simulations results for both Bernoulli and Gaussian models using a sliding window size $n=700$ to detect a large change (scenario A) that occurs at time $k^{*} = 5000$. Suppose our online algorithm raises an alarm at time $j$ with the estimated change-point $\tilde{k}_{j}$ for the sliding window of the observations, $\{x_{j-n+1}, \cdots, x_{j}\}.$ Two probabilities are plotted: one is  the marginal probability of inaccurate estimation and false alarm, $\beta_1 = \Pr(|\tilde{k}_{j}-k^\ast|>\alpha \mbox{ or } k^\ast \notin (j-n+1, j))$, and the other is the conditional probability of inaccurate estimation conditioned on raising an alarm correctly, $\beta_2 = \Pr(|\tilde{k}_{j}-k^\ast|>\alpha | j-n+1 \le k^\ast \le j).$  As $\alpha \to \infty,$ the probability $\beta_1$ becomes the false alarm rate plus the error rate related to the Laplace noise in hypothesis testing. For both Bernoulli and Gaussian models, the right-hand side plots in Figure \ref{fig:online1} (b and d) suggest that the online accuracy conditioned on correctly raising an alarm is very similar to the offline accuracy. Our plots show that the primary challenge in the online setting is determining when to raise an alarm in a sequence of sliding windows of observations. Once such window is identified correctly, the offline estimation algorithm can be used to accurately estimate the change-point.
	
	\begin{center}
\begin{minipage}{.9\linewidth}
	\begin{figure}[H]
		\centering
		\subfloat[][Bernoulli, inaccurate estimation and false alarm]{\includegraphics[width=.4\textwidth]{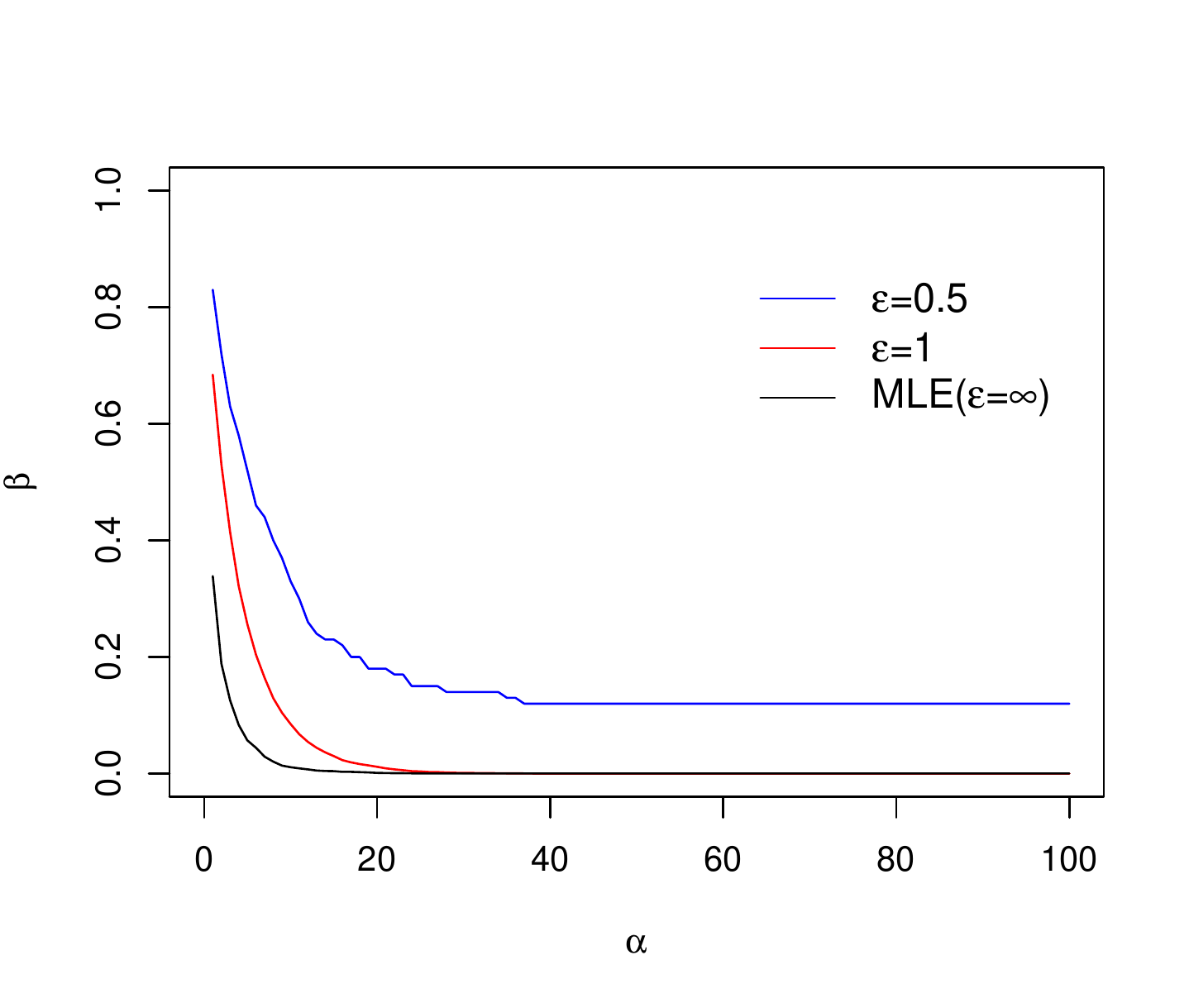}}\quad
		\subfloat[][Bernoulli, inaccurate estimation conditioned on no false alarm]{\includegraphics[width=.4\textwidth]{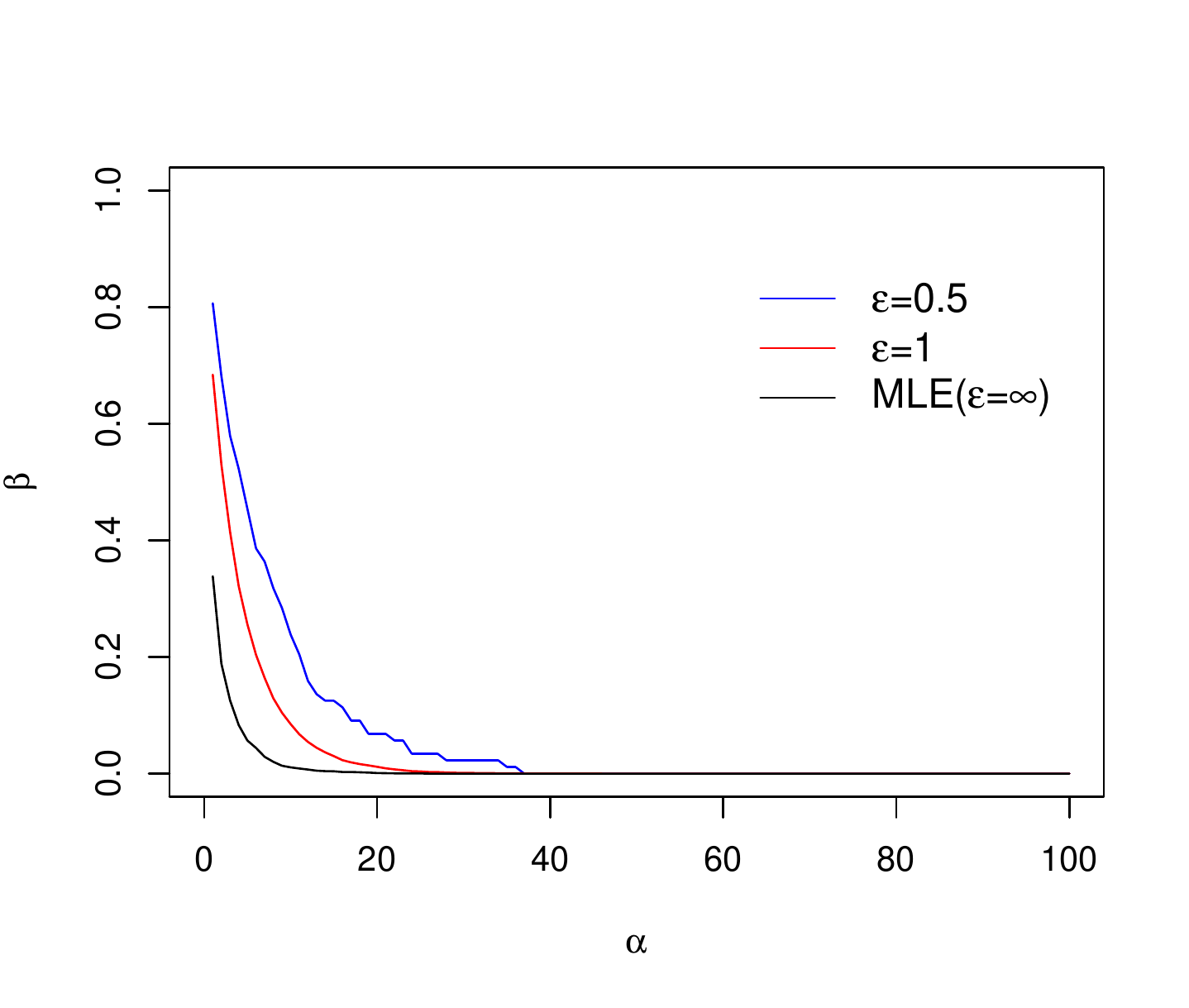}}\\
		\subfloat[][Gaussian, inaccurate estimation and false alarm]{\includegraphics[width=.4\textwidth]{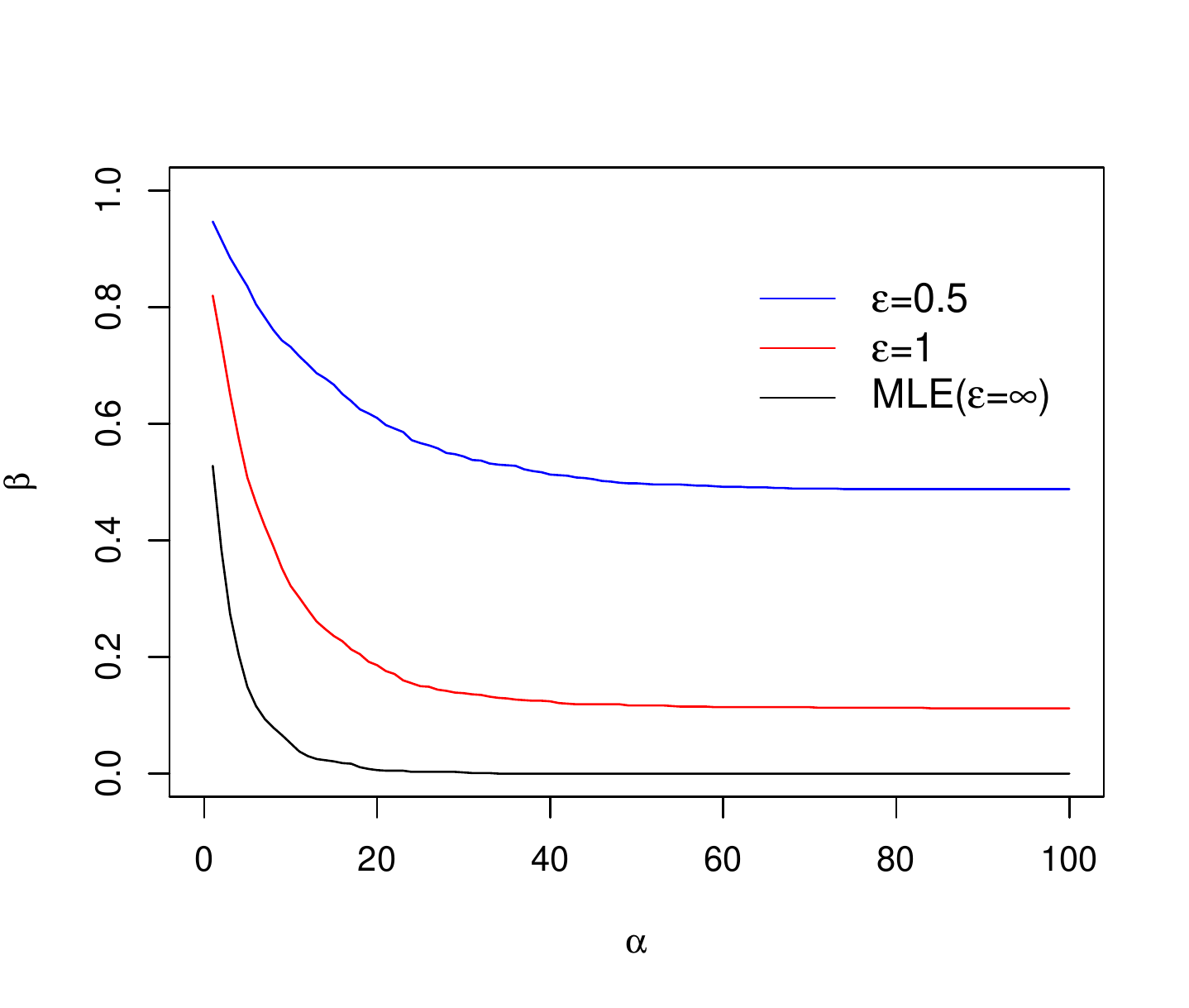}}\quad
		\subfloat[][Gaussian, inaccurate estimation conditioned on no false alarm]{\includegraphics[width=.4\textwidth]{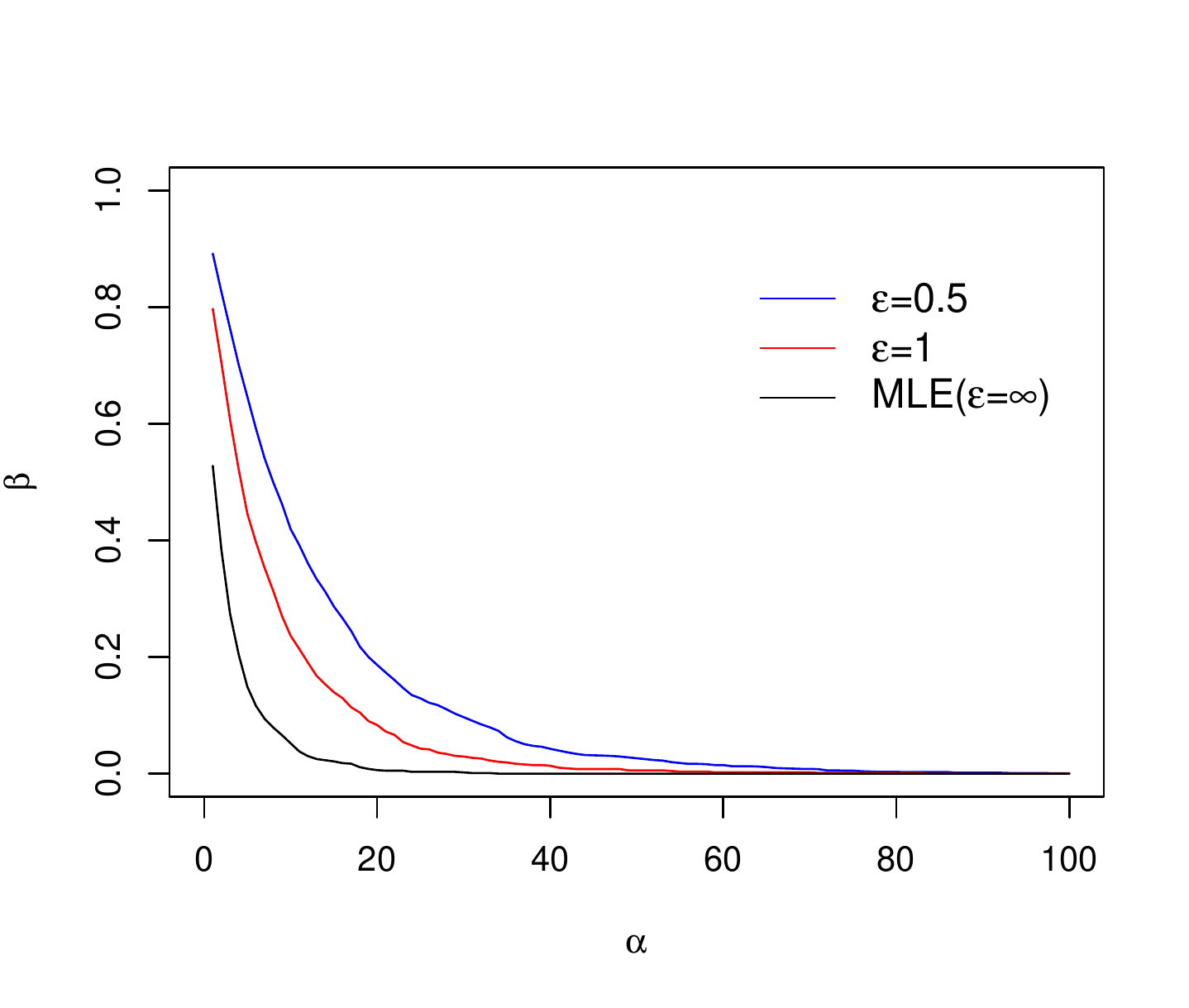}}
		\caption{\small Probability of inaccurate estimation and false alarm (left) and probability of accurate report conditioned on raising an alarm correctly (right) for Monte Carlo simulations with Bernoulli and Gaussian data. Each simulation involves $10^6$ runs of \Online\ with window size $n=700$ and varying $\eps$ on data generated by i.i.d. samples from appropriate distributions with change point $k^*=5000$. 	
} \label{fig:online1}
	\end{figure}
	\end{minipage}
	\end{center}

\bibliography{private}
\bibliographystyle{alpha}

\end{document}